\newtheorem*{theorem*}{Theorem} 
\newtheorem{theorem}{Theorem}[section]
\newtheorem{lemma}[theorem]{Lemma}
\newtheorem{corollary}[theorem]{Corollary}
\newtheorem{definition}[theorem]{Definition}
\theoremstyle{definition}
\newtheorem{remark}[theorem]{Remark}
\newtheorem{example}[theorem]{Example}
\numberwithin{equation}{section}
\numberwithin{figure}{section}
\definecolor{Green}{rgb}{0, 0.65,0}
\begin{document}
		
\title[On the normalised $p$-parabolic equation]{On the normalised $p$-parabolic equation in arbitrary domains}
\author{Nikolai Ubostad}
\address{Department of Mathematical Sciences 
Norwegian University of Science and Technology 
N-7491 Trondheim, Norway}
\email{nikolai.ubostad@ntnu.no}
\date{\today}

\begin{abstract}
The boundary regularity for the normalised \newline $p$-parabolic equation $u_t =\frac{1}{p}|Du|^{2-p}\Delta_pu$ is studied. Perron's method is used to construct solutions in arbitrary domains. We classify the regular boundary points in terms of barrier functions, and prove an Exterior Sphere condition. We identify a fundamental solution, and a Petrovsky criterion is established.  We examine the convergence of solutions as $p \to \infty$.
\end{abstract}
\maketitle
\section{Introduction}
\noindent We investigate Perron solutions of the \emph{normalised} $p$-parabolic equation
\begin{equation}
\label{eq:evoplap}
\begin{split}
u_t &=\frac{1}{p}|Du|^{2-p}\Delta_pu \\
&=\frac{1}{p}\text{tr}(D^2u) +\frac{p-2}{p}\left\langle D^2u\frac{Du}{|Du|}, \frac{Du}{|Du|} \right\rangle,
\end{split}
\end{equation}
where $1<p <\infty$, in general domains $\Omega \subset \mathbb{R}^n\times (-\infty, \infty)$. Here $\Delta_pu$ denotes the $p$-Laplace operator of $u$,
\[
\Delta_pu=\text{div}(|Du|^{p-2}Du),
\]
and we let
\[
\mathcal{A}_pu:=\frac{1}{p}|Du|^{2-p}\Delta_pu.
\]
The operator $\mathcal{A}_p$ is called normalised  since it is homogeneous of degree 1, that is $\mathcal{A}_p(\alpha u)= \alpha \mathcal{A}_pu$. In contrast, the $p$-Laplace operator is homogeneous of degree $p-1$.
 
 Since the equation cannot be written on divergence form, the distributional weak solutions are not available to us. The correct notion is the viscosity solutions, introduced in \cite{crandall1983viscosity}.
 
 Sternberg \cite{sternberg1929gleichung} observed that Perron's method for solving the Dirichlet boundary value problem for Laplace's equation in \cite{perron1923neue} could be extended to the heat equation. We adapt Perron's method to the non-linear equation \eqref{eq:evoplap} in general domains, not necessarily space-time cylinders.
 
Equation \eqref{eq:evoplap}   was studied by Does in connection with  image processing, see \cite{does2009evolution}.  The existence of viscosity solutions on cylinders  $Q_T=Q \times (0, T )$ was established by Perron's method.

Integral to the potential theory is the regularity of boundary points. A point $\zeta_0\in \partial \Omega$ is called regular if, for \emph{every} continuous function $f: \partial \Omega \to \mathbb{R}$, we have
\[
\lim_{\eta \to \zeta_0}u(\eta)=f(\zeta_0), \ \eta \in \Omega.
\]
The boundary values are prescribed as for an elliptic problem. For example, the points on $Q \times \{t=T\}$ are not regular boundary points of the cylinder $Q_T$.

We characterize the regularity of boundary points in terms of \emph{barriers}. We say that $w$ is a barrier at $\zeta$ if $w$ is a positive supersolution of \eqref{eq:evoplap} defined on the entire domain, such that $w(\zeta)=0$ and $w(\eta)>0$ for $\zeta \neq \eta\in \partial \Omega$.

To keep the presentation within reasonable limits, we investigate only the case where the boundary data $f$ is bounded. This is not a serious restriction.

The related equation
 \begin{equation}
 \label{eq:banerjeeeq}
 u_t=|Du|^{2-p}\Delta_pu,
 \end{equation}
 without the factor $1/p$ present, was investigated in \cite{banerjee2015dirichlet}. 
For $p \geq 2$, it was proved that a boundary point $\zeta$ is regular if, and only if, there exists a barrier at $\zeta$. The authors also showed that in the case of space - time cylinder $Q_T$, $(x, t) \in \partial Q\times (0, T]$, is a regular boundary point if and only if $x \in \partial Q$
is a a regular boundary point for the elliptic $p$-Laplacian. 

The regularity of a point is a very delicate issue. Using the Petrovsky criterion, one can construct a domain where the origin is regular for the equation $u_t =\Delta u$, while it is \emph{irregular} for $u_t =\frac{1}{2}\Delta u$, cf \cite{watson2012introduction}. Therefore it is quite remarkable that  as $p \to \infty$, the domain in our Petrovsky criterion converges precisely to the domain in the Petrovsky criterion for the normalised $\infty$-parabolic equation \eqref{eq:norminftylaplace} derived in \cite{ubostad1}, namely that the origin is an irregular point for the domain enclosed by the hypersurfaces
\begin{equation}
\label{eq:infinitypetrovsky}
\begin{split}
&\{(x, t)\in \mathbb{R}^n\times (0, \infty)\ : \ |x|^2 =-4t\log|\log|t||\} \\ &\text{and} \ \{t=-c\}, 
\end{split}
\end{equation}
for $0<c<1$.

We consider \eqref{eq:evoplap} instead of \eqref{eq:banerjeeeq} because of the convergence properties as $p \to \infty$. Results similar to ours could easily be established for \eqref{eq:banerjeeeq} by the same methods.
Indeed, this was recently proven in \cite{bjorn2017tusk}  .
The regular, non-normalised $p$-parabolic equation
\begin{equation}
\label{eq:pparabolic}
u_t = \Delta_pu,
\end{equation}
$1 < p <\infty$, has an interesting history. The initial value problem was first studied by Barenblatt in connection with the propagation of heat after thermonuclear detonations in the atmosphere,  cf. \cite{barenblatt}. The equation has several applications, for example
 in image processing  with variable $p=p(x)$ in \cite{imageenhancement}. The $p$-parabolic equation, together with its stationary counterpart the $p$-Laplace equation, also have interesting applications related to game theory and "Tug-of-War'' games, see \cite{manfredi2010asymptotic} and \cite{peres2008tug}. For the regularity theory regarding equations of this type, we mention \cite{dibenedetto1995}.

For $p=1$, the equation is connected to motion by mean curvature, investigated by Evans and Spruck in \cite{evans1991motion}. The Dirichlet problem in general domains for \eqref{eq:pparabolic} was studied by Kilpeläinen and Lindqvist in \cite{kilpelainen1996dirichlet}. See also \cite{Juutinen2006, crandall2003another}.

Also worth mentioning is the case $p=\infty$, when we get the $\infty$-parabolic equation
\begin{equation}
\label{eq:inftylaplace}
u_t=\Delta_{\infty}u,
\end{equation}
and the related \emph{normalised} $\infty$-parabolic equation
\begin{equation}
\label{eq:norminftylaplace}
u_t=|Du|^{-2}\Delta_{\infty}u := \Delta_{\infty}^Nu,
\end{equation}
where the $\infty$-Laplace operator is given by
\[
\Delta_{\infty} u =\left\langle D^2u \ Du, Du \right\rangle.
\]
 Both of these have an interesting theory in their own right, see \cite{Juutinen2006} regarding the normalised case and \cite{crandall2003another} for \eqref{eq:inftylaplace}.
 
 As $p \to \infty$, \eqref{eq:evoplap} converges to \eqref{eq:norminftylaplace} and \emph{not} to \eqref{eq:inftylaplace}.

Our first result is a characterization of regular boundary points via exterior spheres:
\begin{theorem}[Exterior Sphere]
\label{thm:exterior sphere}
Let $\zeta_0 =(t_0, x_0) \in \partial \Omega$, and suppose that there exists a closed ball $\{(x, t) \ : \ |x-x'|^2+(t-t')^2 \leq R_0^2\}$ intersecting $\overline{\Omega}$ precisely at $\zeta_0$. Then $\zeta_0$ is regular, if the intersection point is \underline{not} the south pole, that is $(x_0, t_0)  \neq (x', t'-R_0)$. If the point of intersection is the north pole, we must restrict the radius of the sphere.
\end{theorem}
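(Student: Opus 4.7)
The plan is to adapt the classical heat-equation exterior-ball barrier to $\mathcal{A}_p$. Writing $(x',t')$ for the centre of the ball and $r(x,t)=\sqrt{|x-x'|^2+(t-t')^2}$ for the spacetime distance, the candidate is
\begin{equation*}
w(x,t)=R_0^{-\alpha}-r(x,t)^{-\alpha},\qquad \alpha>0,
\end{equation*}
which vanishes at $\zeta_0$ and is strictly positive on $\overline{\Omega}\setminus\{\zeta_0\}$ by the exterior-ball hypothesis. A direct differentiation, using that $Dw/|Dw|=(x-x')/|x-x'|$ away from $x=x'$, yields
\begin{equation*}
\mathcal{A}_p w-w_t=\frac{\alpha}{p}\,r^{-\alpha-2}\Bigl\{(n+p-2)-(p-1)(\alpha+2)\rho-p(t-t')\Bigr\},
\end{equation*}
with $\rho:=|x-x'|^2/r^2\in[0,1]$, so $w$ is a classical supersolution precisely where
\begin{equation*}
(p-1)(\alpha+2)\rho+p(t-t')\ \ge\ n+p-2.
\end{equation*}

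Next I would evaluate this inequality at $\zeta_0$. When $\zeta_0$ is neither pole, $\rho(\zeta_0)>0$ and the inequality becomes strict by taking $\alpha$ large. At the north pole $\rho(\zeta_0)=0$ and $t_0-t'=R_0$, reducing the condition to $pR_0\ge n+p-2$, which is exactly the radius restriction announced in the theorem; at the south pole the condition becomes $-pR_0\ge n+p-2$, impossible, accounting for the excluded case. By continuity the strict inequality persists on some spacetime neighbourhood $U$ of $\zeta_0$, so $w$ is a classical viscosity supersolution on $U\cap\Omega$.

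To promote $w$ to a global supersolution I would use the standard truncation $\tilde w:=\min(w,M)$ for small $M>0$. The level set $\{w<M\}$ is a thin shell around the sphere; the hypothesis that the closed ball meets $\overline{\Omega}$ only at $\zeta_0$ (together with a compactness argument, localising to $\Omega\cap B_R(\zeta_0)$ if $\Omega$ is unbounded) produces a positive separation $r\ge r_0>R_0$ on $\overline{\Omega}\setminus U$, so choosing $M<R_0^{-\alpha}-r_0^{-\alpha}$ forces $\{w<M\}\cap\Omega\subset U$. Then $\tilde w$ coincides with $w$ only where $w$ is a supersolution and equals the constant $M$ elsewhere; the minimum of two viscosity supersolutions is a viscosity supersolution, so $\tilde w$ is a global barrier with $\tilde w(\zeta_0)=0$ and $\tilde w>0$ on $\partial\Omega\setminus\{\zeta_0\}$, and regularity follows from the barrier characterisation of regular points established earlier in the paper. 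The principal obstacle is this globalisation step: the candidate $w$ genuinely fails to be a supersolution in the southern directions from the centre of the ball, so truncation is unavoidable, and the whole argument rests on using the exterior-ball hypothesis to keep $\overline{\Omega}$ away from the sphere off $\zeta_0$.
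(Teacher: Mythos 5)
Your barrier $w=R_0^{-\alpha}-r^{-\alpha}$ is a valid alternative to the paper's exponential barrier $w=\mathrm e^{-aR_0^2}-\mathrm e^{-aR^2}$, and your algebra for $\mathcal{A}_p w - w_t$ is correct (I re-derived the bracket $(n+p-2)-(p-1)(\alpha+2)\rho-p(t-t')$ and it checks out). The north/south pole dichotomy, the radius restriction $pR_0\ge n+p-2$, and the localisation via $\min(w,M)$ are all exactly the right moves and parallel what the paper does. The main substantive difference is cosmetic — power law with a large exponent versus Gaussian with a large $a$, both give you the freedom needed to dominate the $t-t'$ term off the axis.

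However, there is a genuine gap at the north pole. Your argument establishes that the \emph{classical} inequality $\mathcal{A}_p w\le w_t$ holds in a neighbourhood of $\zeta_0$, but that inequality only makes literal sense where $Dw\neq 0$, i.e.\ where $x\neq x'$. When $\zeta_0$ is the north pole the axis $\{x=x'\}$ enters every neighbourhood $U\cap\Omega$, and there the operator $\mathcal{A}_p$ is not defined classically; you must check the semicontinuous-envelope condition from the viscosity definition (Definition 3.1), namely that any $C^2$ test function $\phi$ touching $w$ from below at $(x',t)$ satisfies $\phi_t\ge \frac1p\mathrm{tr}(D^2\phi)+\frac{p-2}{p}\lambda(D^2\phi)$. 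Your write-up never does this. It can be repaired essentially for free, because at $x=x'$ one has $D^2w=\alpha r^{-\alpha-2}\,\mathbb{Id}_n$, a scalar multiple of the identity, so $\lambda(D^2w)=\Lambda(D^2w)$ and the test-function inequality follows from $D^2w\ge D^2\phi$ and $\mathrm{tr}(D^2w)\ge\mathrm{tr}(D^2\phi)$ exactly as in the paper's case 2. But as written this verification is missing; for $\zeta_0$ off the poles it does not matter (one localises $U$ away from the axis), but for the north-pole case, which is the one place the radius restriction actually bites, the argument as stated is incomplete.
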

We use a barrier function to prove the following Petrovsky criterion:
\begin{theorem}
\label{thm:petrovsky}
The origin $(0, 0)$ is a regular point for the domain enclosed by the hypersurfaces
\begin{equation*}
\{(x, t)\in \mathbb{R}^n\times (0, \infty)\ : \ |x|^2 =-\beta t\log|\log|t||\} \ \text{and} \ \{t=-c\}, 
\end{equation*}
for $0<c<1$, where
\[
\beta = 4 \ \frac{p-1}{p}.
\]
\end{theorem}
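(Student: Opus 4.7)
My plan is to construct an explicit barrier at the origin and invoke the barrier characterisation of regular boundary points established in the preceding sections of the paper. The entire construction is driven by an explicit self-similar fundamental solution of \eqref{eq:evoplap}.

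I would first identify the radially symmetric self-similar solution via the ansatz
\[
\Phi(x,t)=t^{-\alpha}\exp\!\Bigl(-\frac{|x|^2}{\beta t}\Bigr),\qquad t>0.
\]
Substituting into \eqref{eq:evoplap} and matching coefficients of $|x|^2/t^2$ and $1/t$ forces $\alpha=(n+p-2)/(2(p-1))$ and $\beta=4(p-1)/p$. The fact that the $\beta$ dictated by the PDE coincides \emph{exactly} with the constant in the statement of the theorem is the structural reason behind the result. Because $\mathcal{A}_p$ is $1$-homogeneous and annihilates constants, a chain-rule calculation writing $\Phi=e^v$ and comparing $\Phi_t=\mathcal{A}_p\Phi$ with $\mathcal{A}_p(e^v)=e^v[\mathcal{A}_pv+\frac{p-1}{p}|Dv|^2]$ gives the identity
\[
v_t-\mathcal{A}_p v=\frac{p-1}{p}|Dv|^2\;\geq\;0 \qquad\text{for } v:=\log\Phi,
\]
so $\log\Phi$ is a classical supersolution of \eqref{eq:evoplap}.

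Next, I fix a pole $s_0<-c$ below $\Omega$, so that $\Phi_{s_0}(x,t):=\Phi(x,t-s_0)$ is smooth on all of $\Omega$. Writing $M:=-s_0>c$ and $\tau:=-t$, the basic candidate is
\[
V(x,t)\;:=\;\log\Phi_{s_0}(x,t)-\log\Phi_{s_0}(0,0)\;=\;\alpha\log\frac{M}{M-\tau}-\frac{|x|^2}{\beta(M-\tau)},
\]
which is a supersolution with $V(0,0)=0$, strictly positive on the axis $\{x=0,\,-c<t<0\}$ and on the flat face $\{t=-c\}$ (for $M$ large enough). The decisive computation is the sign of $V$ on the cusp $\{|x|^2=\beta\tau\log|\log\tau|\}$ near the tip; substituting and expanding as $\tau\to 0^+$ gives
\[
V\Big|_{\text{cusp}}\;=\;\frac{\tau}{M}\bigl(\alpha-\log|\log\tau|\bigr)+O\!\bigl(\tau^2\bigr),
\]
so $V$ alone does not suffice: the cusp contributes a defect of order $\tau\log|\log\tau|/M$. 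However, this defect is of a size that can be matched \emph{only} when $\beta=4(p-1)/p$, and I would close the gap by adding a second supersolution contributing a positive term of the same order on the cusp --- either a further shifted logarithmic piece $\log\Phi_{s_1}$ with $s_1\neq s_0$, or a tuned ancillary term of the form $A|x|^2\psi(\tau)+B\theta(\tau)$ whose derivatives are controlled by the constant-annihilating $\mathcal{A}_p$. The resulting $w$ satisfies $w(0,0)=0$, $w>0$ on $\partial\Omega\setminus\{(0,0)\}$, and $w_t\geq\mathcal{A}_pw$ in $\Omega$, and the barrier characterisation then yields regularity of $(0,0)$.

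The hardest step is the final tuning: reconciling the Gaussian ``linear paraboloid'' level sets of $\Phi$ (shape $|x|^2\sim\tau$) with the ``log-paraboloid'' cusp (shape $|x|^2\sim\tau\log|\log\tau|$) while simultaneously preserving the supersolution property throughout $\Omega$, positivity on the axis, and vanishing at the origin. The matching constant $\beta=\frac{4(p-1)}{p}$ is exactly the threshold at which the $\tau\log|\log\tau|$-order cancellation on the cusp is achievable by a corrective supersolution; any other $\beta$ would leave a residual defect of the same leading order and destroy the balance.
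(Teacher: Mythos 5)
Your structural insight — that $\log\Phi$ is a classical supersolution, because for any solution $\Phi=e^v$ of \eqref{eq:evoplap} the identity $v_t-\mathcal{A}_pv=\tfrac{p-1}{p}|Dv|^2\geq 0$ holds — is correct and genuinely different from the paper's route. But the proposal has a real gap: you stop precisely at the step you yourself call ``the hardest.'' You correctly compute that $V\big|_{\mathrm{cusp}}=\tfrac{\tau}{M}(\alpha-\log|\log\tau|)+O(\tau^2)<0$ near the tip, so $V$ is \emph{not} a barrier, and you then gesture at ``a further shifted logarithmic piece $\log\Phi_{s_1}$'' or ``a tuned ancillary term $A|x|^2\psi(\tau)+B\theta(\tau)$'' without constructing either or verifying the supersolution property of the sum. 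That last point matters more than you acknowledge: $\mathcal{A}_p$ is $1$-homogeneous but not additive, so a sum of supersolutions is not automatically a supersolution. (The radial symmetry saves you here since all candidate gradients are parallel to $x$, but this has to be said and checked.) You also never address the viscosity verification at $x=0$, where $Dw=0$ and Definition \ref{def:viscsuper} kicks in; the paper devotes a separate argument to this.

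The paper avoids the logarithm entirely and works with the barrier
\[
w(x,t)=f(t)\,e^{-|x|^2/(\beta t)}+g(t),\qquad f(t)=-\frac{c}{|\log|t||^{\delta+1}},\qquad g(t)=\frac{1}{|\log|t||^{\delta}},\qquad \delta=\frac{c\,\alpha}{\beta},
\]
for which the key computation \eqref{eq:p-parabolicw} collapses the supersolution inequality to a one-line sign check, and the positivity on the cusp follows from the exact cancellation $e^{-|x|^2/(\beta t)}=|\log|t||$ there, giving $w|_{\partial\Omega}=(1-c)/|\log|t||^{\delta}>0$. The gradient of $g$ vanishes, so $Dw$ is entirely controlled by the Gaussian factor, which is why the nonlinearity causes no trouble. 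If you want to pursue your $\log\Phi$ idea, you would need to exhibit the corrective term explicitly, verify that the combined function is still a supersolution (including at $x=0$), and show positivity on the whole boundary away from the origin --- none of which is done here. As written, the proposal identifies the right ingredients but does not prove the theorem.
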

We also have the following irregularity result, showing that Theorem \ref{thm:petrovsky} is in some sense sharp:
\begin{theorem}
 \label{thm:not regular}
 The origin is \emph{not} a regular point of the domain $\Omega$ defined by 
 \[
 |x|^2 =-\beta(1+\epsilon)t\log|{\log{|t|}}|, \ \ t=-c
\]
  for \emph{any} $\epsilon >0$, and $\beta$ as in Theorem \ref{thm:petrovsky}.
 \end{theorem}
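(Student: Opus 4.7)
The argument is a Petrovsky-type irregularity proof. The key simplification is that on radial functions $u(x,t) = \phi(|x|,t)$, a direct computation gives
\[
\mathcal{A}_p u = \frac{p-1}{p}\,\phi_{rr} + \frac{n-1}{pr}\,\phi_r,
\]
so that on this subspace the fully nonlinear equation is linear and we may freely superpose solutions. A backward self-similar ansatz then produces the fundamental-type solution
\[
\Phi(x,t) = (-t)^{-\alpha}\exp\!\left(\frac{|x|^2}{\beta(-t)}\right),\qquad \alpha := \frac{p+n-2}{2(p-1)},
\]
defined for $t<0$, with $\beta = 4(p-1)/p$ as in Theorem \ref{thm:petrovsky}.

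The decisive estimate is that on the enlarged Petrovsky surface $|x|^2 = \beta(1+\epsilon)(-t)\log\log(1/(-t))$ the exponent simplifies and one finds $\Phi = (-t)^{-\alpha}(\log(1/(-t)))^{1+\epsilon}$, whereas on the time axis $\Phi(0,t) = (-t)^{-\alpha}$. The logarithmic factor $(\log(1/(-t)))^{1+\epsilon}$ that appears only off the axis is precisely the slack introduced by the parameter $\epsilon > 0$. Exploiting it, one builds a radial superposition
\[
V(x,t) = \sum_{k} c_k\,\Phi(x,\,t - \tau_k)
\]
with time shifts $\tau_k \downarrow 0$ and weights $c_k > 0$ chosen so that $V$ is uniformly bounded on $\partial\Omega \setminus \{(0,0)\}$ but $V(0,-\tau_k) \geq c_k\,\tau_k^{-\alpha}\to\infty$ along a suitable sequence. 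Linearity on the radial subspace guarantees that $V$ is a viscosity supersolution of the normalised $p$-parabolic equation in $\Omega$.

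To conclude irregularity via the barrier characterisation, suppose for contradiction that $(0,0)$ admits a barrier $w$. Then $w$ is a positive supersolution vanishing continuously at the origin and bounded below by some $\eta > 0$ on any subset of $\partial\Omega$ staying a fixed distance from $(0,0)$. A comparison of $\eta^{-1}w$ with a suitably normalised truncation of $V$ on a small time slab $\Omega\cap\{t > -\delta\}$ then forces $w(0,-\tau_k)\geq c'\,\tau_k^{-\alpha}$ along the axis, contradicting $w(0,t)\to 0$ as $t\to 0^-$. Hence no barrier can exist at the origin and $(0,0)$ is irregular. The principal technical obstacle is the calibration of the sequences $(c_k)$ and $(\tau_k)$: the shifts must cluster at $0$ fast enough to generate axial blow-up, while the super-exponential off-axis decay of $\Phi$ must dominate the lateral sum on the $(1+\epsilon)$-dilated cusp. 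The $\epsilon$-margin is exactly what makes this balance feasible, and explains the optimality of $\beta$ in Theorem \ref{thm:petrovsky}.
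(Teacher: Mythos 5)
Your proposal takes a genuinely different route from the paper---superposition of time-shifted fundamental solutions and a comparison with a hypothetical barrier---whereas the paper constructs a \emph{single} explicit subsolution and analyzes its level sets. Unfortunately, the route as written cannot close, for a reason that is structural rather than merely computational.

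For $t<0$ the function you write, $\Phi(x,t)=(-t)^{-\alpha}\exp\!\bigl(|x|^2/(\beta(-t))\bigr)$, has a \emph{positive} exponent, so $\Phi$ \emph{grows} in $|x|$. You compute correctly that on the dilated cusp $\Phi=(-t)^{-\alpha}\bigl(\log(1/(-t))\bigr)^{1+\epsilon}$, which is \emph{larger} than the on-axis value $(-t)^{-\alpha}$---yet your concluding sentence invokes the ``super-exponential off-axis decay of $\Phi$''. There is no off-axis decay; the slack produced by $\epsilon$ runs against you, not for you. Concretely, for every $\tau_k>0$, every boundary point $(x,t)$ with the same time coordinate, and every $t\in(-\delta,0)$,
\[
\Phi(x,t-\tau_k)=(\tau_k+|t|)^{-\alpha}\exp\!\left(\frac{(1+\epsilon)|t|\log\log(1/|t|)}{\tau_k+|t|}\right) > (\tau_k+|t|)^{-\alpha}=\Phi(0,t-\tau_k),
\]
so \emph{termwise} $V|_{\partial\Omega}(\cdot,t)>V(0,t)$ for any choice of positive weights $c_k$; moreover both sides tend to $\sum_k c_k\tau_k^{-\alpha}$ as $t\to0^-$. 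Hence it is impossible for $V$ to stay uniformly bounded on $\partial\Omega\setminus\{(0,0)\}$ while $V(0,-\tau_k)\gtrsim c_k\tau_k^{-\alpha}\to\infty$---no calibration of $(c_k)$ and $(\tau_k)$ can fix this. (A subsidiary issue: the final step compares a barrier $w$, a supersolution, with $V$, which you also call a supersolution; the comparison principle requires one side to be a subsolution. This is repairable since $V$ is formally a solution, but the obstruction above is not.)

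The paper avoids all of this by writing down one explicit smooth function $w(x,t)=-|\log|t||^{-1-\epsilon_1}e^{-k|x|^2/(\beta t)}+\bigl(\log|\log|t||\bigr)^{-1}$ with $k\in(\tfrac12,1)$, checking directly that it is a viscosity \emph{subsolution} (including at $x=0$ via the Hessian-multiple-of-identity argument), and then using its negative level sets $\{w=m\}$, $m<0$, to cut out a subdomain $\tilde\Omega\subset\Omega$ on whose boundary $w\equiv m$ while $w\to0$ along the interior axis. The jump forces the Perron upper solution to miss the boundary datum at the origin, and Corollary \ref{cor:irregular} then transfers irregularity from $\tilde\Omega$ to $\Omega$. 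The real slack in the argument is the factor $k<1$ in the exponent (used in establishing the subsolution inequality off the axis), not a logarithmic gain off the axis. If you want to salvage a superposition-style argument you would first need a building block that \emph{does} decay off the axis for $t<0$; the natural candidate $(-t)^{-\alpha}e^{-|x|^2/(\beta(-t))}$ solves the \emph{backward} equation $u_t+\mathcal{A}_pu=0$, not the forward one, and so is not admissible for the comparison you need.
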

 These results are similar to the classical Petrovsky criterion for the heat equation, derived in \cite{petrovsky1935ersten}.
  
 The article is structured as follows. In Section \ref{sec:severalsolutions} we investigate several explicit solutions of \eqref{eq:evoplap}. We also transform it into a heat equation with variable coefficient. Basic facts regarding viscosity solutions, Perron solutions, a comparison principle and the barrier characterization are displayed in Section \ref{sec:comparison}.  The exterior sphere condition in Theorem \ref{thm:exterior sphere} is derived in Section \ref{sec:exterior sphere}. As a demonstration of the necessity of eliminating the south pole, we show that the latest moment of the $p$-parabolic ball is not regular. Section \ref{sec:petrovsky} is dedicated to the Petrovsky criterion, that is the proof of Theorem \ref{thm:petrovsky} and Theorem \ref{thm:not regular}.
 \subsection{Notation}
 In what follows $\Omega$ is an \emph{arbitrary} domain in $\mathbb{R}^n \times (-\infty, \infty)$. $Q_T$ is a space-time cylinder: $Q_T = Q\times (0, T)$, $\partial \Omega$ is the Euclidean boundary of $\Omega$ and $\partial_pQ_T$ is the \emph{parabolic} boundary of $Q_T$, i.e. $(\overline{Q}\times \{0\}) \cup (\partial Q\times (0, T])$. (the "bottom" and the sides of the cylinder. The top is excluded.) $\zeta, \eta \in \mathbb{R}^n \times \mathbb{R}$ are points in space-time, that is $\zeta =(x, t)$.
 
 We denote by $Du$ the gradient of $u(x, t)$  taken with respect to the spatial coordinates $x$, and $D^2u$ is the spatial Hessian matrix of $u$. $\langle a, b \rangle$ is the Euclidean inner product of the vectors $a, b \in \mathbb{R}^n$.
 and $x \otimes y$ denotes the tensor product of the vectors $x, y$, that is $(x\otimes y)_{i, j}=x_iy_j$.
 
   The space of lower semi-continuous functions from $\Omega$ to $\mathbb{R}\cup \{\infty\}$ is denoted by $\operatorname{LSC}(\Omega)$, while $\operatorname{USC}(\Omega)$ contains the upper semicontinuous ones.
\section{Several solutions}
\label{sec:severalsolutions}
\noindent We derive several explicit solutions to \eqref{eq:evoplap}, and identify the fundamental solution.
\subsection{Uniform propagation}
Assume $u(x, t) = w(\langle a, x \rangle-bt)$, $a\in \mathbb{R}^n$, $b\in \mathbb{R}$. We then get 
\begin{align*}
& u_t =-bw' ,\\
& u_{x_i} =a_iw', \\
& u_{x_ix_j}=a_ia_jw'',
\end{align*}
and hence 
\[
\Delta u =\Delta_{\infty}^N =|a|^2w''.
\]
Inserting this into \eqref{eq:evoplap}, we get that $w$ must satisfy
\[
w'+w''|a|^2\frac{p-1}{bp} =0,
\]
with solution
\[
w(\zeta) = A+B\operatorname{e}^{-\frac{\zeta}{m}},
\]
or
\[
u(x, t) = A+B\operatorname{e}^{-\frac{1}{m}(\langle a, x \rangle-bt)},
\]
with $m =|a|^2\frac{p-1}{bp}$.
 and $\zeta =\langle a ,x \rangle-bt$.
\subsection{Separable solution}
Assume $u(x, t)=f(r)+g(t)$, $r=|x|$. We get
\begin{align*}
u_t =g', \\
\Delta u = f''+\frac{n-1}{r}f', \\
\Delta_{\infty}^Nu = f''.
\end{align*}
Thus
\[
g'(t) =c,
\]
and
\[
f''(r)+\frac{n-1}{(p-1)r}f'(r) =c
\]
So
\[
f(r)=  \frac{1}{2}\frac{cp}{n+p}r^2 + c_1\frac{p-1}{p-n}r^{\frac{p-n}{p-1}}+c_2,
\]
Setting $c_1=c_2=0$, we get the solution
\[
u(x, t) =c\left(\frac{p}{n+p}|x|^2+2\frac{p-1}{p-n}|x|^{\frac{p-n}{p-1}}+2t\right).
\]
\subsection{Heat Equation transformation}
We search for solutions on the form
$u(x, t) =v(r^{\nu}, t)$, where $r=|x|$ and $\nu$ is a critical exponent to be determined. This gives
\begin{align*}
&\Delta u = \frac{\partial^2}{\partial r^2}v(r^{\nu}, t) + \frac{n-1}{r}\frac{\partial}{\partial r}v(r^{\nu}, t), \\
&\Delta_{\infty}^Nu =  \frac{\partial^2}{\partial r^2}v(r^{\nu}, t).
\end{align*}
Calculating, we get
\begin{align*}
&\frac{\partial}{\partial r}v(r^{\nu}, t) = \nu r^{\nu-1}v', \\
&\frac{\partial^2}{\partial r^2}v(r^{\nu}, t) = \nu(\nu-1)r^{\nu-2}v'+\nu^2r^{2\nu-2}v'',
\end{align*}
where 
\[
v' =\frac{\partial v}{\partial \rho}, \ \rho =r^{\nu}
\]
Hence we get
\begin{align*}
&\Delta u = \nu(\nu-1)r^{\nu-2}v'+\nu^2r^{2\nu-2}v''+\nu r^{\nu-2}v', \\
&\Delta_{\infty}^Nu = \nu(\nu-1)r^{\nu-2}v'+\nu^2r^{2\nu-2}v''.
\end{align*}
Inserting this into \eqref{eq:evoplap} and collecting terms we get
\begin{equation}
\label{eq:almostheat}
v_t= \nu^2\frac{p-1}{p}r^{2\nu-2}v''+\frac{\nu}{p}r^{\nu-2}[(p-2)(\nu-1)+n+\nu]v'.
\end{equation}
We want to eliminate the first order terms in \eqref{eq:almostheat}, so we demand
\[
(p-2)(\nu-1)+n+\nu = 0,
\]
or 
\[
\nu = \frac{p-n}{p-1}.
\]
Then \eqref{eq:evoplap}  reads
\[
v_t =\frac{(p-n)^2}{p(p-1)}\rho^{\frac{2-n}{p-n}}v_{\rho \rho},
\]
where 
\[
\rho = |x|^{\frac{p-n}{p-1}}.
\]
\subsection{Similarity I}
We make the ansatz
\[
u(x, t) =F(\zeta), \ \ \zeta =\frac{|x|^2}{t},
\]
We calculate
\begin{align*}
& u_t =-\frac{F'(\zeta)\zeta}{t}, \\
& Du =\frac{2F'(\zeta)x}{t}, \\
& D^2u =\frac{2F'(\zeta)}{t}I+\frac{4F''(\zeta)}{t^2}(x \otimes x).
\end{align*}
Hence
\begin{align*}
\text{tr}(D^2u) =\frac{2F'(\zeta)}{t}n+\frac{4F''(\zeta)}{t^2}|x|^2, \\
\Delta_{\infty}^Nu =\frac{2F'(\zeta)}{t}+\frac{4F''(\zeta)}{t^2}|x|^2.
\end{align*}
So, if $u$ is a solution then
\[
-\frac{F'(\zeta)\zeta}{t}-\alpha \frac{F'(\zeta)}{t}- \beta\frac{F''(\zeta)\zeta}{t} =0,
\]
with $\alpha, \beta$ as in \eqref{eq:bestsolution}. If $t \neq 0$;
\[
\frac{F''(\zeta)}{F'(\zeta)}=\frac{d}{d\zeta}\log F'(\zeta) = -\frac{\alpha}{\beta \zeta} -\frac{1}{\beta}.
\]
Integrating the above gives
\[
\log F'(\zeta) =-\frac{\zeta}{\beta}-\frac{\alpha}{\beta}\log\zeta,
\]
or
\[
F(\zeta) = C\int_0^{\zeta}s^{-\frac{\alpha}{\beta}}\operatorname{e}^{-\frac{s}{\beta}} \ ds.
\]
This leads to the solution
\begin{equation}
\label{eq:tardsoln}
u(x, t) = C\int_{0}^{|x|^2/t}s^{-\frac{\alpha}{\beta}}\operatorname{e}^{-\frac{s}{\beta}} \ ds.
\end{equation}
This solution is not differentiable where $x=0$. However, \eqref{eq:tardsoln} is a solution outside the line $\{0\}\times(0, \infty)\subset \mathbb{R}^n\times (0, \infty)$, and a subsolution or supersolution depending on the sign of $C$ in all of $\mathbb{R}^n\times (0, \infty)$.
\subsection{Similarity II}
\label{sec:similarity1}
We note that if $u(x, t)$ is a solution of \eqref{eq:evoplap}, then so is $v(x, t) =u(Ax, A^2t)$. We search for solutions on the form
\[
u(x, t) =g(t)f(\zeta), \ \ \zeta =\frac{|x|^2}{t}.
\] 
Inserting this into \eqref{eq:evoplap}, we get
\[
u_t =g'(t)f(\zeta) -\frac{g(t)f'(\zeta)\zeta}{t},
\]
\[
Du = \frac{2g(t)f'(\zeta)}{t}x,
\]
and
\[
D^2u = \frac{2g(t)f'(\zeta)}{t} I +\frac{4g(t)f''(\zeta)}{t^2}(x\otimes x).
\]
Hence we see
\[
\text{tr}(D^2u) = \frac{2g(t)f'(\zeta)}{t} n+\frac{4g(t)f''(\zeta)}{t^2}|x|^2,
\]
and

\[
\Delta_{\infty}^Nu =  \frac{2g(t)f'(\zeta)}{t} +\frac{4g(t)f''(\zeta)}{t^2}|x|^2.
\]
Therefore, $u$ is a solution to \eqref{eq:evoplap} if
\[
g'(t)f(\zeta)-\frac{g(t)f'(\zeta)\zeta}{t} =2\frac{p+n-2}{p}\frac{g(t)f'(\zeta)}{t}+4\frac{p-1}{p}\frac{g(t)f''(\zeta)\zeta}{t}.
\]
 with $\alpha = 2\ \frac{p+n-2}{p}$ and $\beta =4\ \frac{p-1}{p}$,
\[
tg'(t)f(\zeta)-\alpha g(t)f'(\zeta) =g(t)\zeta(f'(\zeta)+\beta f''(\zeta)).
\]
for $t>0$. The right hand side of this is zero if $f(\zeta)=\operatorname{e}^{-\frac{\zeta}{\beta}}$. Inserting this back in, we see that
\[
f(\zeta) \left(tg'(t)+\frac{\alpha}{\beta}g(t)\right) =0, 
\]
with solution
\[
g(t) =t^{-\frac{\alpha}{\beta}}.
\]
Together, this gives
\begin{equation}
\label{eq:bestsolution}
\begin{split}
& u(x, t) =t^{-\frac{\alpha}{\beta}}\operatorname{e}^{-\frac{|x|^2}{\beta t}}, \\
 &\alpha = 2\ \frac{p+n-2}{p}, \ \beta =4\ \frac{p-1}{p}.
\end{split}
\end{equation}
This is a solution for $t>0$, and if we replace $t$ by $-t$ we get a solution for negative $t$, as well.\footnote{It has recently come to the author's attention that this solution also was found in \cite{banerjee2013gradient}.}
\begin{remark}
As $p\to \infty$, $ \alpha \to 2 $ and $\beta \to 4$. This gives that \eqref{eq:bestsolution} converges to the fundamental solution 
\[
W(x, t) =\frac{1}{\sqrt{t}}\operatorname{e}^{-\frac{|x|^2}{4t}}.
\]
of the normalised $\infty$-parabolic equation found in \cite{Juutinen2006}. Compare also \eqref{eq:bestsolution} with the fundamental solution to the heat equation,
\[
H(x, t) = \frac{1}{(4\pi t)^{n/2}}\mathrm{e}^{-\frac{|x|^2}{4t}}.
\]
\end{remark}
\section{Comparison and Perron solutions}
\label{sec:comparison}
\noindent In this Section we present several basic facts regarding the existence of solutions to \eqref{eq:evoplap}, and present Perron's method. We start with the definition of viscosity solutions. If $Du=0$, we replace the operator $\mathcal{A}_p$ with its lower or upper  semicontinuous envelope:
	\begin{definition}
		\label{def:viscsuper}
		A lower semicontinuous function $u \in L^{\infty}(\Omega)$ is  a \emph{viscosity supersolution} of \eqref{eq:evoplap} provided that, if $ u - \phi$ has a minimum at $\zeta_0 \in \Omega$ for  $\phi \in C^2 (\Omega)$, then 
		\[
		\begin{cases}
		\phi_t(\zeta_0)-\mathcal{A}_p\phi(\zeta_0) \geq 0, \ &\text{if} \ D\phi(\zeta_0)\neq 0, \\
		\phi_t(\zeta_0) -\frac{1}{p}\text{tr}(D^2\phi(\zeta_0))-\frac{p-2}{p}\lambda(D^2\phi(\zeta_0))\geq 0, \ &\text{if} \ D\phi(\zeta_0)=0. 
		\end{cases}		
		\]
			An upper semicontinuous function $u \in L^{\infty}(\Omega)$ is  a \emph{viscosity subsolution} of \eqref{eq:evoplap} provided that, if $ u - \phi$ has a maximum at $\zeta_0 \in \Omega$ for  $\phi \in C^2 (\Omega)$, then 
		\[
		\begin{cases}
		\phi_t(\zeta_0)-\mathcal{A}_p\phi(\zeta_0) \leq 0, \ &\text{if} \ D\phi(\zeta_0)\neq 0, \\
		\phi_t(\zeta_0) -\frac{1}{p}\text{tr}(D^2\phi(\zeta_0))-\frac{p-2}{p}\Lambda(D^2\phi(\zeta_0))\leq 0, \ &\text{if} \ D\phi(\zeta_0)=0. 
		\end{cases}		
		\]
		
		A function that is both a viscosity sub- and supersolution is called a \emph{viscosity solution}.
		
		Here $\lambda(D^2\phi(\zeta_0))$, $\lambda(D^2\phi(\zeta_0))$ denotes the smallest and largest eigenvalues of the Hessian matrix $D^2\phi(\zeta_0)$, and $\text{tr}(D^2\phi(\zeta_0))$ is its trace. 
	\end{definition}
It turns out that the second condition in Definition \ref{def:viscsuper} can be relaxed. This  is the Lemma 2 in \cite{manfredi2010asymptotic}.
\begin{lemma}
	\label{def:p-parabolic1}
	An upper semicontinuous function $u \in L^{\infty}(\Omega)$, is a viscosity subsolution of \eqref{eq:evoplap} provided that, if $ u - \phi$ has a maximum at $\zeta_0 \in \Omega$ for  $\phi \in C^2 (\Omega)$, then either
	\[
	\phi_t(\zeta_0)-\mathcal{A}_p\phi(\zeta_0) \leq 0, \ \text{if} \ D\phi(\zeta_0)\neq 0,
	\]
	or 
	\[
	\phi_t(\zeta_0) \leq 0, \ \text{if} \ D\phi(\zeta_0)=0, \ D^2\phi(\zeta_0) =0.
	\]
	A similar result holds for viscosity supersolutions.
\end{lemma}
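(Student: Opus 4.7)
The direction ``full $\Rightarrow$ relaxed'' is immediate: when $D\phi(\zeta_0)=0$ and also $D^2\phi(\zeta_0)=0$, both the trace and the extremal eigenvalue term in Definition~\ref{def:viscsuper} vanish, leaving precisely $\phi_t(\zeta_0)\leq 0$. The content of the lemma is the converse, and my plan is a perturbation argument in the spirit of Lemma~2 of \cite{manfredi2010asymptotic}. Assume $u$ satisfies the relaxed hypothesis and $u-\phi$ has a local maximum at $\zeta_0$. The case $D\phi(\zeta_0)\neq 0$ is covered by the first clause, and the case $D\phi(\zeta_0)=D^2\phi(\zeta_0)=0$ reduces trivially to the second. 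The only configuration needing work is $D\phi(\zeta_0)=0$ with $A:=D^2\phi(\zeta_0)\neq 0$; here the target inequality is
\[
\phi_t(\zeta_0)-\frac{1}{p}\text{tr}(A)-\frac{p-2}{p}\Lambda(A)\leq 0.
\]

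I would first upgrade the maximum to a strict one by replacing $\phi$ with $\phi(x,t)+|x-x_0|^{4}+(t-t_0)^{4}$, which preserves $D\phi$, $D^2\phi$ and $\phi_t$ at $\zeta_0$. Next I would pick a unit eigenvector $q$ of $A$ with $\langle Aq,q\rangle=\Lambda(A)$ and introduce the family of linear perturbations
\[
\phi_\varepsilon(x,t):=\phi(x,t)+\varepsilon\,\langle q,x-x_0\rangle, \qquad \varepsilon>0,
\]
so that $D\phi_\varepsilon(\zeta_0)=\varepsilon q\neq 0$ while the Hessian and time derivative at $\zeta_0$ are unchanged. Strictness of the maximum yields a local maximiser $\zeta_\varepsilon$ of $u-\phi_\varepsilon$ converging to $\zeta_0$ as $\varepsilon\to 0$, and the quartic obstacle delivers a quantitative rate $|\zeta_\varepsilon-\zeta_0|=O(\varepsilon^{1/3})$. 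Whenever $D\phi_\varepsilon(\zeta_\varepsilon)\neq 0$, the first clause of the relaxed hypothesis yields the viscosity subsolution inequality at $\zeta_\varepsilon$ with the direction $\eta_\varepsilon=D\phi_\varepsilon(\zeta_\varepsilon)/|D\phi_\varepsilon(\zeta_\varepsilon)|$; passing to a subsequence on which $\eta_\varepsilon\to\bar q\in S^{n-1}$ and using continuity gives
\[
\phi_t(\zeta_0)-\frac{1}{p}\text{tr}(A)-\frac{p-2}{p}\langle A\bar q,\bar q\rangle\leq 0.
\]

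The main obstacle is twofold. First, one must rule out the exceptional case $D\phi_\varepsilon(\zeta_\varepsilon)=0$, which would force $D\phi(\zeta_\varepsilon)=-\varepsilon q$ and leave the relaxed condition silent (since $D^2\phi(\zeta_\varepsilon)\to A\neq 0$); this I would handle either by perturbing $q$ to $q+\delta q'$ for a generic transverse direction $q'$, or by a two-parameter version of the argument combined with a Sard-type exceptional-set estimate. Second, and more delicately, one must pin $\bar q=q$ in order to identify $\langle A\bar q,\bar q\rangle$ with $\Lambda(A)$, which requires $\varepsilon q$ to dominate $D\phi(\zeta_\varepsilon)=A(x_\varepsilon-x_0)+o(|\zeta_\varepsilon-\zeta_0|)$ in the limit, i.e. $|\zeta_\varepsilon-\zeta_0|=o(\varepsilon)$. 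The quartic obstacle alone only produces $O(\varepsilon^{1/3})$, so I expect to sharpen it by enlarging the obstacle to order $2k$ with $k$ large (and a coefficient possibly depending on $\varepsilon$), so that the rearrangement $C_\varepsilon|x_\varepsilon-x_0|^{2k-1}\leq\varepsilon$ forces the required super-linear decay. Balancing the obstacle against the perturbation is the technically delicate heart of the argument.
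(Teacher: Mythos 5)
The paper does not prove this lemma; it is quoted verbatim from Lemma~2 of \cite{manfredi2010asymptotic} and used as a black box, so there is no in-paper argument to compare against. Judged on its own, your framework (reduce to the case $D\phi(\zeta_0)=0$, $A=D^2\phi(\zeta_0)\neq 0$; make the maximum strict; perturb linearly in the direction of the top eigenvector; pass to the limit along $\zeta_\varepsilon\to\zeta_0$) is the right kind of argument and is in the spirit of the cited proof. But your diagnosis of the two obstacles is off in a way that matters.

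Your second ``obstacle'' --- pinning $\bar q=q$ --- is not actually an obstacle, and the fix you propose for it cannot work anyway. Once you pass to the limit you obtain, for some unit vector $\bar q$,
\[
\phi_t(\zeta_0)-\tfrac{1}{p}\operatorname{tr}(A)-\tfrac{p-2}{p}\langle A\bar q,\bar q\rangle\leq 0.
\]
For $p\geq 2$ one has $\tfrac{p-2}{p}\geq 0$ and $\langle A\bar q,\bar q\rangle\leq\Lambda(A)$, hence
\[
\phi_t(\zeta_0)-\tfrac{1}{p}\operatorname{tr}(A)-\tfrac{p-2}{p}\Lambda(A)\;\leq\;\phi_t(\zeta_0)-\tfrac{1}{p}\operatorname{tr}(A)-\tfrac{p-2}{p}\langle A\bar q,\bar q\rangle\;\leq\;0,
\]
so \emph{any} limiting direction suffices; you do not need $\bar q=q$, and a fortiori you do not need a super-linear convergence rate. (For $1<p<2$ the correct semicontinuous envelope replaces $\Lambda$ by $\lambda$ in the subsolution test, and then $\langle A\bar q,\bar q\rangle\geq\lambda(A)$ together with $\tfrac{p-2}{p}<0$ closes the argument in exactly the same way.) Moreover, even if $o(\varepsilon)$ decay were needed, enlarging the obstacle to $|x-x_0|^{2k}$ only yields $|x_\varepsilon-x_0|=O(\varepsilon^{1/(2k-1)})$, which tends to $O(\varepsilon)$ as $k\to\infty$ but never beats it; the ``balancing'' step you sketch does not converge to the estimate you want.

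Your first obstacle --- ruling out $D\phi_\varepsilon(\zeta_\varepsilon)=0$ --- is the genuine one, and it is the part you leave least resolved. If $D\phi_\varepsilon(\zeta_\varepsilon)=0$ while $D^2\phi_\varepsilon(\zeta_\varepsilon)\to A\neq 0$, the relaxed hypothesis tells you nothing at $\zeta_\varepsilon$. Invoking ``a generic transverse direction'' or ``a Sard-type estimate'' gestures at the right idea, but you need to spell out the measure-theoretic or Jensen--Ishii-type argument that makes it rigorous: for instance, perturb by $\langle a,x-x_0\rangle$ over $a$ in a ball of radius $\varepsilon$ and show the set of $a$ for which every maximiser has vanishing perturbed gradient is negligible. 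Until that step is pinned down the proof has a hole, and it is precisely the step where the cited reference \cite{manfredi2010asymptotic} does real work.
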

 We define $p$-parabolic functions in $\Omega$ as follows:
\begin{definition}
\label{def:p-parabolic2}
A function $u\in LSC(\Omega)\cap L^{\infty}(\Omega)$ is  a \emph{supersolution} to \eqref{eq:evoplap} if
it satisfies the following comparison principle: \\

 On each set of the form $Q_{t_1, t_2} =Q \times (t_1, t_2 )$  with closure in $\Omega$, and for each solution $h$ to \eqref{eq:evoplap} continuous up to the closure of $Q_{t_1, t_2}$, and $h \leq u $ on $\partial_p Q_{t_1, t_2}$, then  
 \[
 h \leq u \ \text{in}  \ Q_{t_1, t_2}.
 \]
\end{definition}
In Banerjee–Garofalo \cite{banerjee2015dirichlet}, they use the name \emph{generalized super/subsolution} instead of super/subparabolic function. They prove that these are the same as the viscosity super/subsolutions in a given domain. Hence we can use the term parabolic interchangeably with viscosity solution.

The assumption that supersolutions are bounded is not needed. See Theorem 2.6 in \cite{bjorn2017tusk}.

We shall improve this to include comparison on general domains $\Sigma$ compactly contained in $\Omega$, see Lemma \ref{thm:bettercomparison}.

Using the classical comparison principle for cylindrical domains $Q_T=Q\times (0, T)$ and a covering argument, we can prove Theorem 3.10 from \cite{banerjee2015dirichlet}. This is the comparison principle, essential for Perron's method to work.
\begin{theorem}
\label{thm:comparison}
Suppose $u$ is a supersolution bounded from  above and $v$ is a subsolution bounded from below of \eqref{eq:evoplap} in a bounded
open set $\Omega \subset \mathbb{R}^{n+1}$. If at each point $\zeta_0 \in \partial \Omega$ we have
\[
\limsup_{\zeta \to \zeta_0} v(\zeta) \leq \liminf_{\zeta \to \zeta_0} u(\zeta),
\]
then $v \leq u$ in $\Omega$.
\end{theorem}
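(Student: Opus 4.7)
The plan is to argue by contradiction, bootstrapping the cylindrical comparison principle built into Definition \ref{def:p-parabolic2} to an arbitrary bounded $\Omega$ via a time-slicing covering argument.

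First I would replace $v$ and $u$ on $\partial\Omega$ by their upper and lower semicontinuous envelopes, so that $v\in\mathrm{USC}(\overline\Omega)$ is bounded above, $u\in\mathrm{LSC}(\overline\Omega)$ is bounded below, and the boundary hypothesis becomes the pointwise inequality $v\leq u$ on $\partial\Omega$. Suppose for contradiction that $M:=\sup_\Omega(v-u)>0$. In order to promote the inequality to a strict one, I would perturb by setting $\tilde v(x,t):=v(x,t)-\epsilon(t-T_-)$, with $T_-<\inf_\Omega t$ and $\epsilon>0$ chosen small enough that $\sup_\Omega(\tilde v-u)>M/2$. A direct check using Lemma \ref{def:p-parabolic1} confirms that $\tilde v$ remains a viscosity subsolution, and indeed a \emph{strict} one in the sense that $\tilde v_t-\mathcal{A}_p\tilde v\leq-\epsilon$ in the viscosity sense; the boundary hypothesis is preserved because $t-T_-\geq 0$ on $\overline\Omega$, so $\tilde v\leq v$.

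Next, upper semicontinuity of $\tilde v-u$ together with the decay $\limsup(\tilde v-u)\leq 0$ at $\partial\Omega$ imply that the level set $\tilde K:=\{\tilde v-u=\tilde M\}$, with $\tilde M:=\sup_\Omega(\tilde v-u)>0$, is a non-empty compact subset of $\Omega$. I would single out an extremal point $(x_0,t_0)\in\tilde K$ by first minimizing the $t$-coordinate and then, say, the first spatial coordinate. For sufficiently small $r,h>0$ the cylinder $Q:=B_r(x_0)\times(t_0-h,t_0+h)$ has closure in $\Omega$ and satisfies $\tilde v-u<\tilde M$ on its parabolic boundary $\partial_pQ$: on the bottom this follows from the minimality of $t_0$ combined with upper semicontinuity and compactness of $\overline{B_r(x_0)}\times\{t_0-h\}$, and on the lateral surface from the extremal choice of $x_0$ together with smallness of $r$. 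On $Q$, the comparison between sub- and super-solutions (obtained from Definition \ref{def:p-parabolic2} by sandwiching $\tilde v$ and $u$ with the classical solution of the Dirichlet problem on $Q$ matching a continuous approximation of $u|_{\partial_pQ}$, or directly via the Ishii--Jensen doubling argument as in \cite{banerjee2015dirichlet}) then forces $\tilde v-u\leq\tilde M-\delta$ throughout $Q$ for some $\delta>0$, contradicting the equality $(\tilde v-u)(x_0,t_0)=\tilde M$.

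The principal obstacle is the isolation step: ensuring that the lateral face of $Q$ can be made to stay strictly below $\tilde M$, i.e., that the compact level set $\tilde K$ can be separated from the sides of a small cylinder around $(x_0,t_0)$ without simultaneously being separated from $(x_0,t_0)$ itself. An alternative realisation of the ``covering'' viewpoint, which may be cleaner, is to prove by slicing that $\tilde v\leq u$ on $\Omega\cap\{t\leq s\}$ for a monotonically increasing family of levels $s$, covering the compact slice $\overline\Omega\cap\{t=s\}\setminus\partial\Omega$ by finitely many cylinders at each step and applying the cylindrical comparison on each; continuity in $s$ of the resulting ``good'' set closes the argument.
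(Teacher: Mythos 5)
The paper offers no proof of this theorem: the sentence immediately preceding it states that the result ``can be proved'' using the cylindrical comparison principle together with ``a covering argument'' and then simply cites Theorem~3.10 of Banerjee--Garofalo \cite{banerjee2015dirichlet}, where the proof is carried out by the Ishii--Jensen doubling machinery. So there is no in-paper argument against which to compare yours; what follows is an assessment of your proposal on its own terms.

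The preliminary steps of your sketch are sound. Extending $v$ and $u$ to $\overline\Omega$ by their upper and lower semicontinuous envelopes is legitimate and converts the boundary hypothesis into the pointwise inequality $v\leq u$ on $\partial\Omega$. The perturbation $\tilde v=v-\epsilon(t-T_-)$ is also fine: it produces a strict subsolution, it keeps the boundary inequality because $t-T_-\geq0$ on $\overline\Omega$, and boundedness of $\Omega$ lets you take $\epsilon$ small enough that $\tilde M:=\sup_\Omega(\tilde v-u)>M/2>0$. The level set $\tilde K=\{\tilde v-u=\tilde M\}$ is then a nonempty compact subset of $\Omega$, because $\tilde v-u$ is USC on $\overline\Omega$ and $\leq 0$ on $\partial\Omega$.

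The genuine gap is exactly the one you flag yourself: the isolation step. Minimizing the time coordinate does control the bottom face, but minimizing the first spatial coordinate over $\tilde K\cap\{t=t_0\}$ does \emph{not} give you a cylinder whose lateral face avoids $\tilde K$. Nothing in the argument rules out $\tilde K\cap\{t=t_0\}$ being, say, a closed ball $\overline B_\rho(x^*)$ in $\mathbb R^n\times\{t_0\}$. If $x_0$ is an extreme point of that ball (the point minimizing $\langle x, e_1\rangle$, for instance), then for every $r<2\rho$ the sphere $\partial B_r(x_0)$ meets $\overline B_\rho(x^*)$, so there are points on the lateral face of $Q=B_r(x_0)\times(t_0-h,t_0+h)$ at which $\tilde v-u=\tilde M$. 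No choice of $r,h$ produces the uniform strict gap $\tilde v-u\leq\tilde M-\delta$ on $\partial_pQ$ that the rest of the argument requires, so the application of the cylindrical comparison cannot conclude. Shrinking the cylinder so that $(x_0,t_0)$ sits on the top face does not help either: the lateral face still reaches $t=t_0$, and, since $\tilde v-u$ is only USC, being $\leq\tilde M-\delta$ in the open cylinder does not propagate upward to the top face.

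To salvage the proof you would need a different device. The strict-subsolution perturbation you already made is the right preparation; what is missing is either (i) a comparison statement on domains more adapted to the parabolic geometry than cylinders (a truncated paraboloid or ``tusk'' around $(x_0,t_0)$, whose parabolic boundary lies entirely in $\{t<t_0\}$ and hence below the minimal time of $\tilde K$), or (ii) the slicing argument you gesture at in your last paragraph, carried out carefully: one shows that the set of levels $s$ for which $\tilde v\leq u$ on $\Omega\cap\{t<s\}$ is both open and closed in the appropriate sense, covering the compact slab $\overline\Omega\cap\{s-\sigma\leq t\leq s+\sigma\}$ by finitely many cylinders compactly contained in $\Omega$ and invoking the cylindrical comparison on each. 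That second route is most plausibly what the paper means by ``a covering argument,'' but as written your sketch does not supply the details that make it close, and the cylinder-around-an-extremal-point version as you have laid it out does not work.

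One small further remark: as you note, Definition~\ref{def:p-parabolic2} only compares a supersolution with a \emph{solution} on a cylinder. Upgrading this to a sub-versus-super comparison on a cylinder, as your plan requires, needs an intermediate solution to sandwich between $\tilde v$ and $u$; this is the same device the paper uses later in Lemma~\ref{thm:bettercomparison} and relies on existence of a Perron solution on the cylinder with continuous boundary data (Theorem~2.6 of \cite{banerjee2015dirichlet}, cited elsewhere in the paper). That ingredient should be invoked explicitly rather than left to an ``or''.
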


\subsection{The Perron Method}
\label{subsec:perron}
We start  with a definition. Let $f : \partial \Omega \to \mathbb{R}$ be a continuous function.
\begin{definition}
A function $u$ belongs to the \emph{upper class} $\mathcal{U}_f$ if $u$ is a viscosity supersolution in $\Omega$ and
\begin{equation*}
\liminf_{\eta \to \zeta}u(\eta) \geq f(\zeta)
\end{equation*}
for $\zeta \in \partial \Omega$.

Likewise, a function $v$ belongs to the \emph{lower class} $\mathcal{L}_f$ if $v$ is a viscosity subsolution in $\Omega$ and
\begin{equation*}
\limsup_{\eta \to \zeta}v(\eta) \leq f(\zeta)
\end{equation*}
for all $\zeta \in \partial \Omega$.

We define the \emph{upper solution} 
\[
\overline{H}_f(\zeta)=\inf\{u(\zeta) : u \in \mathcal{U}_f \},
\] 
and the \emph{lower solution} 
\[
\underline{H}_f(\zeta)=\sup\{v(\zeta) : v \in \mathcal{L}_f \}.
\]
 Note that at each point the $\inf$ and $\sup$ are taken over the \emph{functions}.
 \end{definition}
\begin{remark}
The Comparison Principle, Theorem \ref{thm:comparison}, gives immediately that $v \leq u$ in $\Omega$, for $v \in\mathcal{L}_f$ and $u \in \mathcal{U}_f$, and hence 
\[\underline{H}_f\leq \overline{H}_f.\]
 Whether $\underline{H}_f= \overline{H}_f$ holds in general, is a more subtle question.
\end{remark}
We need that the Upper and Lower Perron solutions, indeed, are viscosity solutions to \eqref{eq:evoplap}. 
\begin{theorem*}[Theorem 3.12 in \cite{banerjee2015dirichlet}]
 The upper Perron solution $\overline{H}_f$ and the lower Perron solution $\underline{H}_f$
are solutions to \eqref{eq:evoplap} in $\Omega$.
\end{theorem*}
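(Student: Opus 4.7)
The plan is the classical Perron argument. Since the statement for $\underline{H}_f$ is symmetric, I focus on $\overline{H}_f$. Writing $u^{*} = (\overline{H}_f)^{*}$ for the upper semicontinuous envelope and $u_{*} = (\overline{H}_f)_{*}$ for the lower semicontinuous envelope, the strategy is: (i) show $u_{*}$ is a viscosity supersolution in $\Omega$; (ii) show $u^{*}$ is a viscosity subsolution in $\Omega$; (iii) conclude via Theorem \ref{thm:comparison} that $u^{*} \leq u_{*}$, so $u^{*} = u_{*} = \overline{H}_f$ is continuous and hence a viscosity solution.

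For (i), suppose $\phi \in C^{2}$ touches $u_{*}$ from below at $\zeta_0 \in \Omega$; by subtracting a small quartic term $|\zeta - \zeta_0|^{4}$ one may assume the touching is strict on a closed cylinder $K \subset\subset \Omega$ around $\zeta_0$. By definition of $u_{*}$, there exist $\zeta_k \to \zeta_0$ and $u_k \in \mathcal{U}_f$ with $u_k(\zeta_k) \to u_{*}(\zeta_0) = \phi(\zeta_0)$. Each $u_k - \phi$ attains its minimum over $\overline{K}$ at some point $\eta_k$; strict touching forces $\eta_k$ into the interior of $K$ for $k$ large, and $\eta_k \to \zeta_0$. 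Applying the supersolution property of $u_k$ at $\eta_k$ and letting $k \to \infty$ gives the desired inequality at $\zeta_0$, with the degenerate case $D\phi(\zeta_0) = 0$ handled by the supersolution analogue of Lemma \ref{def:p-parabolic1}.

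Step (ii) is the delicate part, carried out by contradiction. Assume $u^{*}$ fails to be a subsolution at some $\zeta_0 \in \Omega$. Standard viscosity arguments then produce a smooth \emph{strict} classical supersolution $w$ of \eqref{eq:evoplap} on a small closed cylinder $K \ni \zeta_0$, with $w(\zeta_0) < u^{*}(\zeta_0)$ but $w > \overline{H}_f$ on $\partial K$. Define
\[
V(\zeta) = \begin{cases} \min\bigl(w(\zeta),\,\overline{H}_f(\zeta)\bigr), & \zeta \in K, \\ \overline{H}_f(\zeta), & \zeta \in \Omega \setminus K. \end{cases}
\]
Because the pointwise minimum of two supersolutions is a supersolution, and the strict inequality $w > \overline{H}_f$ on $\partial K$ makes the pasting lower semicontinuous across $\partial K$, the function $V$ is a supersolution on $\Omega$; it agrees with $\overline{H}_f$ near $\partial \Omega$, hence $V \in \mathcal{U}_f$. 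By choice of $w$ there exists a point $\eta \in K$ where $V(\eta) < \overline{H}_f(\eta)$, which contradicts the definition of $\overline{H}_f$ as the pointwise infimum over $\mathcal{U}_f$.

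Having (i) and (ii), Theorem \ref{thm:comparison} applied in $\Omega$ to the bounded subsolution $u^{*}$ and the bounded supersolution $u_{*}$ gives $u^{*} \leq u_{*}$ in $\Omega$ (the boundary inequality $\limsup_{\eta \to \zeta} u^{*}(\eta) \leq f(\zeta) \leq \liminf_{\eta \to \zeta} u_{*}(\eta)$ follows from the defining inequalities of $\mathcal{U}_f$ and $\mathcal{L}_f$ together with $\underline{H}_f \leq \overline{H}_f$). Combined with the automatic inequality $u_{*} \leq u^{*}$, both envelopes coincide with $\overline{H}_f$, so $\overline{H}_f$ is continuous and a viscosity solution. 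The principal obstacle is the bump construction in step (ii): the non-divergence form of $\mathcal{A}_p$ and the need for the relaxed inequalities when $Dw = 0$ must be handled carefully, which is why Lemma \ref{def:p-parabolic1} is indispensable.
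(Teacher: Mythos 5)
Your outline follows the Ishii--Perron scheme (envelopes, bump modification, global comparison), which is \emph{not} the route Banerjee--Garofalo take for Theorem~3.12, and the scheme breaks down in step (iii) precisely because the domain $\Omega$ is arbitrary. To invoke Theorem~\ref{thm:comparison} with the subsolution $u^{*}$ and the supersolution $u_{*}$, you need
\[
\limsup_{\eta\to\zeta} u^{*}(\eta) \;\leq\; \liminf_{\eta\to\zeta} u_{*}(\eta)
\qquad\text{for every } \zeta\in\partial\Omega .
\]
You claim this follows from the defining inequalities of $\mathcal{U}_f$ and $\mathcal{L}_f$ together with $\underline{H}_f\leq\overline{H}_f$, but it does not: for $u\in\mathcal{U}_f$ one controls $\liminf u$ from \emph{below}, which gives nothing about $\limsup\overline{H}_f$ (an infimum) from above, and dually for $\mathcal{L}_f$. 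The needed two-sided squeeze at a boundary point $\zeta$ is essentially the statement that $\zeta$ is regular --- exactly what cannot be assumed in arbitrary $\Omega$. (Compare the paper's Example with the cylinder top $\{t=T\}$, where every point can be irregular; there the inequality you rely on manifestly fails.) Without it, you cannot conclude $u^{*}\leq u_{*}$, hence cannot conclude that $\overline{H}_f$ is continuous, and an $\overline{H}_f$ that is neither USC nor LSC is not a viscosity solution in the sense of Definition~\ref{def:viscsuper}.

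There is a second, smaller gap in step (ii): your pasted function $V=\min(w,\overline{H}_f)$ on $K$ is only a viscosity supersolution if it is lower semicontinuous, which requires $\overline{H}_f$ to be LSC. Since $\overline{H}_f$ is a pointwise infimum of LSC functions, its lower semicontinuity is not automatic and has to be established (in the Kilpeläinen--Lindqvist framework this is done via Choquet's topological lemma and downward-directedness of $\mathcal{U}_f$).

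The proof that the paper appeals to (Theorem~3.12 of \cite{banerjee2015dirichlet}, in the spirit of Kilpeläinen--Lindqvist \cite{kilpelainen1996dirichlet}) is purely local and sidesteps boundary regularity entirely: one fixes a box $Q_{t_1,t_2}\subset\subset\Omega$, uses Choquet's lemma to extract a monotone sequence $v_j\in\mathcal{L}_f$ whose USC-regularized supremum equals $\underline{H}_f^{*}$, replaces each $v_j$ by its Poisson modification $V_j$ (the solution of the Dirichlet problem in the box with boundary data $v_j$, pasted back into $\Omega$; still in $\mathcal{L}_f$ by the comparison Definition~\ref{def:p-parabolic2}), and then lets $V_j\nearrow V$ using interior Harnack-type compactness. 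The limit $V$ is a solution in the box, squeezed between $\underline{H}_f$ and $\underline{H}_f^{*}$, forcing $\underline{H}_f=V$ there. Since the box is arbitrary, $\underline{H}_f$ is a solution in $\Omega$. No boundary behaviour of $\partial\Omega$ enters, which is exactly what makes the theorem true in arbitrary domains. Your envelope/comparison argument, by contrast, proves the weaker statement that $\overline{H}_f$ is a solution \emph{when a suitable barrier exists at every boundary point}, which is not the assertion being made.
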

An integral part of the theory of Perron solutions are the boundary regularity and the barrier functions.
\begin{definition}
We say that $\zeta_0 \in \partial \Omega $ is a \emph{regular} boundary point if 
\[
\lim_{\zeta \to \zeta_0}\underline{H}_f(\zeta) =f(\zeta_0).
\]
for every continuous function $f: \partial \Omega \to \mathbb{R}$.
\end{definition}
Note that we instead could have used $\overline{H}_f$ in the above, since $\underline{H}_f =-\overline{H}_{-f}$.
\begin{remark}
The Petrovsky condition in Theorem \ref{thm:not regular} shows that a point can be regular for $u_t=\mathcal{A}_pu$ but \emph{not} for $u_t=\mathcal{A}_qu$, $p<q$. Hence it would be more accurate to use the term $p$-\emph{regular}, but we  use regular where no confusion will arise. 
\end{remark}
\begin{definition}
	\label{def:barrier}
A function $w$ is a barrier at $\zeta_0 \in \partial \Omega$ if
\begin{enumerate}
\item $w >0$ and $w$ is $p$-superparabolic in $\Omega$,
\item $\liminf_{\zeta \to \eta}w(\zeta) >0$ for $\zeta_0 \neq \eta \in \partial \Omega$,
\item $\lim_{\zeta \to \zeta_0}w(\zeta) =0.$
\end{enumerate}
\end{definition}
Using barrier functions, we can prove the following classical result, which is 
Theorem 4.2 in \cite{banerjee2015dirichlet}:
\begin{theorem}
\label{thm:barrier}
A boundary point $\zeta_0$ is regular if and only if there exists a barrier at $\zeta_0.$
\end{theorem}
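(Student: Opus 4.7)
The plan is a double-implication argument along classical Perron-theoretic lines. The decisive structural feature we will exploit is the $1$-homogeneity of $\mathcal{A}_p$: a direct computation gives $\mathcal{A}_p(\alpha u + c) = \alpha\, \mathcal{A}_p u$ for $\alpha > 0$ and $c \in \mathbb{R}$, so whenever $w$ is a supersolution, $c + Kw$ remains a supersolution for $K, c \geq 0$, and $-w$ is a subsolution. This lets us construct the required test functions as positive affine combinations of the barrier.

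For sufficiency, suppose $w$ is a barrier at $\zeta_0$ and let $f : \partial \Omega \to \mathbb{R}$ be continuous and bounded, with $c := f(\zeta_0)$ and $M := \sup_{\partial \Omega}|f|$. Given $\epsilon > 0$, I would pick a neighborhood $U$ of $\zeta_0$ in $\mathbb{R}^{n+1}$ such that $|f-c| < \epsilon$ on $U \cap \partial \Omega$. Since $\eta \mapsto \liminf_{\zeta \to \eta} w(\zeta)$ is lower semicontinuous and strictly positive on the compact set $\partial \Omega \setminus U$, it admits a uniform positive lower bound $\delta > 0$. The natural candidate is
\[
\phi(\zeta) \ := \ c + \epsilon + \tfrac{2M}{\delta}\, w(\zeta).
\]
One verifies $\phi \in \mathcal{U}_f$ by checking the $\liminf$ boundary condition piecewise: the constant piece controls $U \cap \partial \Omega$, while on $\partial \Omega \setminus U$ the barrier term, scaled by $2M/\delta$, contributes at least $2M$ and overpowers $f$. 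Hence $\overline{H}_f \leq \phi$, and letting $\zeta \to \zeta_0$ followed by $\epsilon \to 0$ gives $\limsup_{\zeta \to \zeta_0}\overline{H}_f(\zeta) \leq c$. The symmetric function
\[
\psi(\zeta) \ := \ c - \epsilon - \tfrac{2M}{\delta}\, w(\zeta)
\]
is a subsolution by homogeneity, and the analogous bookkeeping shows $\psi \in \mathcal{L}_f$, so $\liminf_{\zeta \to \zeta_0}\underline{H}_f(\zeta) \geq c$. Combining with Theorem~\ref{thm:comparison} (which yields $\underline{H}_f \leq \overline{H}_f$) we obtain $\lim_{\zeta \to \zeta_0}\overline{H}_f(\zeta) = \lim_{\zeta \to \zeta_0}\underline{H}_f(\zeta) = f(\zeta_0)$, i.e.\ $\zeta_0$ is regular.

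For necessity, assume $\zeta_0$ is regular, and let $w := \underline{H}_f$ for the concrete choice $f(\zeta) := \min\{|\zeta - \zeta_0|,\,1\}$, which is continuous and bounded with $f(\zeta_0) = 0$ and $f > 0$ elsewhere on $\partial \Omega$. Then $w$ is a viscosity solution, hence superparabolic, by the Banerjee--Garofalo theorem cited above. Since $0 \in \mathcal{L}_f$, we have $w \geq 0$, and strict positivity follows from a strong minimum principle (or by inserting a small positive subsolution produced by the exterior-sphere construction of Theorem~\ref{thm:exterior sphere}). Regularity at $\zeta_0$ gives $\lim_{\zeta \to \zeta_0} w(\zeta) = 0$. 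For the remaining condition $\liminf_{\zeta \to \eta} w(\zeta) > 0$ at $\eta \neq \zeta_0$, I would construct, for each such $\eta$, a subsolution $v \in \mathcal{L}_f$ that is bounded below near $\eta$: continuity of $f$ gives $f > f(\eta)/2$ on a neighborhood of $\eta$ in $\partial \Omega$, and a local Perron solution on a parabolic cylinder meeting $\partial \Omega$ only near $\eta$ (patched to $0$ by a cutoff away from $\eta$) delivers such a $v$, whence $\underline{H}_f \geq v$ forces $\liminf_{\zeta \to \eta} w(\zeta) \geq f(\eta)/2 > 0$.

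The hard part is precisely this last verification in the necessity direction: the point $\eta \neq \zeta_0$ may itself be irregular, so regularity of $\zeta_0$ does not transfer, and a local subsolution must be produced from scratch. The sufficiency direction, by contrast, is a routine calculation once the $1$-homogeneity of $\mathcal{A}_p$ is exploited to build $\phi$ and $\psi$, and the lower semicontinuity of $\eta \mapsto \liminf w(\zeta)$ together with compactness of $\partial \Omega \setminus U$ produces the uniform constant $\delta$ used to balance the barrier against $f$.
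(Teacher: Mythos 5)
The paper does not prove Theorem~\ref{thm:barrier} internally; it states it and points to Theorem~4.2 of Banerjee--Garofalo \cite{banerjee2015dirichlet}. So there is no in-paper proof to compare against word for word, and your proposal must stand on its own.

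Your sufficiency direction (barrier $\Rightarrow$ regular) is the standard Perron argument and is essentially correct. The observation that $\mathcal{A}_p(\alpha u+c)=\alpha\mathcal{A}_p u$ for $\alpha>0$ shows that $c+\epsilon+\tfrac{2M}{\delta}w$ is again a supersolution and $c-\epsilon-\tfrac{2M}{\delta}w$ a subsolution, the lower semicontinuity of the boundary trace $\eta\mapsto\liminf_{\zeta\to\eta}w(\zeta)$ on the compact set $\partial\Omega\setminus U$ yields the constant $\delta>0$, and squeezing with $\underline{H}_f\le\overline{H}_f$ gives $\lim_{\zeta\to\zeta_0}\underline{H}_f(\zeta)=f(\zeta_0)$. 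This part is sound.

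The necessity direction has a genuine gap precisely where you flag it: the construction of a subsolution $v\in\mathcal{L}_f$ with $\liminf_{\zeta\to\eta}v>0$ for a fixed $\eta\ne\zeta_0$. A ``local Perron solution on a parabolic cylinder meeting $\partial\Omega$ only near $\eta$, patched to $0$ by a cutoff'' is not a valid subsolution as written: a cutoff destroys the viscosity property unless it is implemented as a pasting of two subsolutions with a one-sided inequality on the overlap, and that requires the local Perron solution to be nonpositive on the inner part of $\partial(Q\cap\Omega)$, which you have not arranged. The correct way to close this is to build $v_\eta$ explicitly. Moreover, your choice $f(\zeta)=\min\{|\zeta-\zeta_0|,1\}$ is awkward for this purpose: a quadratic subsolution competing against a linear (truncated) $f$ cannot in general be made both $\le f$ on all of $\partial\Omega$ and positive at $\eta$ for large domains. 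Using instead $f(\zeta)=|\zeta-\zeta_0|^2$ makes the bookkeeping work. Concretely, for $\eta$ directly above $\zeta_0$ in time one can take $v(\zeta)=a|x-x_0|^2+b(t-t_0)-K$ with $b\le a\alpha$, $a\le 1$, $b<4(t_\eta-t_0)$ and $K=b^2/4$; for $\eta$ directly below, $v(\zeta)=|x-x_0|^2-b(t-t_\eta)+K'$ with $b<4(t_0-t_\eta)$ and $0<K'<b(t_0-t_\eta)-b^2/4$; and for $\eta$ with $x_\eta\ne x_0$, a translated concave profile $K'-a|x-x_\eta|^2-b(t-t_\eta)$ with $b\ge a\alpha$, $b<4|x_0-x_\eta|^2/\alpha$. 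Each of these is a classical (hence viscosity) subsolution of \eqref{eq:evoplap}, lies below $|\zeta-\zeta_0|^2$ on $\partial\Omega$, and is strictly positive at $\eta$. This is the step you would need to carry out to make the necessity direction rigorous; the invocation of a strong minimum principle for strict positivity of $w$ also merits a reference, although it is a lesser issue.
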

The existence of a barrier is a local property in the following sense: Let $\tilde{\Omega}$ be another domain such that
\[
\overline{B}\cap \tilde{\Omega} =\overline{B}\cap \Omega
\]
for an open ball $B$ centered at $\zeta_0$. Suppose there is a barrier $w$ at $\zeta_0$, and let
\[
m= \inf\{w(\zeta)\ : \ \zeta \in \partial B \cup \tilde{\Omega}\}.
\]
It now follows that the function
\[
v=
\begin{cases}
\min(w, m) \ &\text{in} \ B\cup \tilde{\Omega}, \\
m \ &\text{in} \ \Omega \setminus B
\end{cases}
\]
is a barrier in $\Omega$.  Indeed, since $w$ is assumed to be a barrier in $\tilde{\Omega}$, $\left.w \right|_{\tilde{\Omega}}> 0$ by the definition, and therefore $m>0$ and $v>0$. Since $\tilde{\Omega}\cap \overline{B}=\Omega \cap \overline{B}$ we see that $\liminf_{\eta \to \zeta}v(\eta)=\liminf_{\eta \to \zeta}w(\eta)>0$ on $\partial \tilde{\Omega} \cap \overline{B}$ and $\liminf_{\eta \to \zeta}v(\eta)=m>0$ elsewhere. At last, $\lim_{\zeta \to \zeta_0}v(\zeta)=\lim_{\zeta \to \zeta_0}w(\zeta)=0$.  
From this we get the following useful corollary.
\begin{corollary}
	\label{cor:irregular}
	Let $\tilde{\Omega} \subset \Omega$, and let $\zeta_0$ be a common boundary point. If $\zeta_0$ is \emph{not} a regular point for $\tilde{\Omega}$, then it is not a regular point for $\Omega$.
\end{corollary}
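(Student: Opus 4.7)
The plan is to prove the contrapositive: if $\zeta_0$ is a regular boundary point of $\Omega$, then it is also a regular boundary point of $\tilde{\Omega}$. By the barrier characterization (Theorem \ref{thm:barrier}), it suffices to show that a barrier at $\zeta_0$ for $\Omega$ restricts to a barrier at $\zeta_0$ for $\tilde{\Omega}$. So I will fix a barrier $w$ at $\zeta_0$ in $\Omega$ and check that $\tilde{w} := \left.w\right|_{\tilde{\Omega}}$ satisfies the three conditions of Definition \ref{def:barrier} with respect to $\tilde{\Omega}$.

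First I would verify that $\tilde{w}$ is positive and $p$-superparabolic in $\tilde{\Omega}$. Positivity is immediate from $w>0$ in $\Omega \supset \tilde{\Omega}$. For the superparabolicity, I would use the comparison-based Definition \ref{def:p-parabolic2}: any cylinder $Q_{t_1,t_2}$ compactly contained in $\tilde{\Omega}$ is automatically compactly contained in $\Omega$, so the comparison property transfers verbatim from $w$ to $\tilde{w}$. Likewise, the limit condition $\lim_{\zeta \to \zeta_0}\tilde{w}(\zeta)=0$ for $\zeta \in \tilde{\Omega}$ follows directly from the corresponding property of $w$, since approach from within $\tilde{\Omega}$ is a special case of approach from within $\Omega$.

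The step requiring the most care is the liminf condition at the remaining boundary points of $\tilde{\Omega}$. A point $\eta \in \partial \tilde{\Omega}$ with $\eta \neq \zeta_0$ falls into one of two cases. If $\eta \in \partial \Omega$, then the barrier property of $w$ on $\Omega$ gives $\liminf_{\zeta \to \eta}\tilde{w}(\zeta) \geq \liminf_{\zeta \to \eta, \zeta \in \Omega}w(\zeta) > 0$ directly. If instead $\eta$ lies in the interior of $\Omega$, then $w(\eta)$ is defined and strictly positive, and since $w \in \operatorname{LSC}(\Omega)$ we obtain
\[
\liminf_{\zeta \to \eta}\tilde{w}(\zeta) \geq \liminf_{\zeta \to \eta, \, \zeta \in \Omega} w(\zeta) \geq w(\eta) > 0.
\]
This two-case split is the only subtlety in the argument, and it is precisely here that the lower semicontinuity built into the notion of supersolution (via Definition \ref{def:p-parabolic2}) is essential.

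Combining the three verifications, $\tilde{w}$ is a barrier at $\zeta_0$ for $\tilde{\Omega}$, so $\zeta_0$ is regular for $\tilde{\Omega}$ by Theorem \ref{thm:barrier}. This contradicts the hypothesis of the corollary, completing the proof. I do not expect any genuine obstacle beyond the case distinction above; the result is essentially a direct consequence of the locality of the supersolution property together with the lower semicontinuity of $w$ at interior points of $\Omega$.
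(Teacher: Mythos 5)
Your proof is correct, and it takes a genuinely different and arguably cleaner route than the paper's. The paper derives the corollary by pointing to the preceding ``locality'' discussion, where a barrier is truncated as $\min(w,m)$ inside a ball $B$ centered at $\zeta_0$ and set equal to the constant $m$ outside; but that discussion is formulated under the hypothesis $\overline{B}\cap\tilde\Omega = \overline{B}\cap\Omega$, i.e.\ that the two domains \emph{coincide} near $\zeta_0$, which is strictly stronger than the bare inclusion $\tilde\Omega \subset \Omega$ assumed in the corollary. You instead simply restrict $w$ to $\tilde\Omega$ and check the three barrier conditions directly. The only subtle step is the one you flag: a boundary point $\eta \in \partial\tilde\Omega\setminus\{\zeta_0\}$ may lie in the \emph{interior} of $\Omega$, where the barrier property of $w$ on $\partial\Omega$ says nothing directly, but lower semicontinuity of the supersolution $w$ together with the strict positivity $w(\eta)>0$ gives $\liminf_{\zeta\to\eta}w(\zeta)\geq w(\eta)>0$, and the $\liminf$ taken over $\zeta\in\tilde\Omega$ can only be larger. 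Superparabolicity of the restriction is immediate, as you note, because the comparison test in Definition~\ref{def:p-parabolic2} involves only cylinders compactly contained in the domain, and any such cylinder in $\tilde\Omega$ is automatically compactly contained in $\Omega$. In sum, your direct restriction argument avoids invoking a construction whose stated hypothesis is not met, and it makes explicit the role of lower semicontinuity at boundary points of $\tilde\Omega$ interior to $\Omega$, which the paper's one-line proof leaves implicit.
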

\begin{proof}
	Let $\tilde{\Omega} \subset \Omega$, and let $\zeta_0$ be an irregular boundary point for $\tilde{\Omega}$. Assume that $\zeta_0$ is regular for $\Omega$. Then Theorem \ref{thm:barrier} gives that there exists a barrier, $w$, in $\Omega$. The above implies the existence of a barrier in $\tilde\Omega$, contradicting the irregularity of $\zeta_0$.
\end{proof}
A classical application of the theory of viscosity solutions is the following convergence lemma. 
\begin{lemma}
\label{thm:convergenceofeq}
Assume that $\{u_p\}_p$ is a sequence of viscosity solutions of 
\[
u_t-\mathcal{A}_pu=0.
\]
Assume further that $\{u_p\}_p$ contains a subsequence $\{u_{p_j}\}_j$ that converges uniformly to a function $u_{\infty}$ in $\Omega$. Then, as $j \to \infty$, the  $u_{p_j}$
converge to $u$, the viscosity solution of the normalised $\infty$-parabolic equation \eqref{eq:norminftylaplace}, that is
\[
u_t-\Delta^N_{\infty}u=0.
\]
\end{lemma}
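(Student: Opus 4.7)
The plan is to apply the standard viscosity-solution stability argument. Since $u_{p_j} \to u_\infty$ uniformly, any $C^2$ test function $\phi$ touching $u_\infty$ at a point $\zeta_0 \in \Omega$ can be transferred to nearby touching points $\zeta_j \to \zeta_0$ for $u_{p_j}$, where the viscosity inequality for $u_{p_j}$ is in force. I would then pass to the limit $p_j \to \infty$, using that
\[
\mathcal{A}_p \phi = \frac{1}{p}\text{tr}(D^2\phi) + \frac{p-2}{p}\left\langle D^2\phi \frac{D\phi}{|D\phi|}, \frac{D\phi}{|D\phi|}\right\rangle \longrightarrow \Delta_\infty^N \phi
\]
whenever $D\phi \neq 0$. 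It suffices to verify that $u_\infty$ is a subsolution of $u_t = \Delta_\infty^N u$; the supersolution case is symmetric.

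For the concrete mechanics, let $\phi \in C^2(\Omega)$ be such that $u_\infty - \phi$ has a local maximum at $\zeta_0$. After replacing $\phi$ by $\phi(\zeta) + |\zeta - \zeta_0|^4$, the maximum becomes strict while the first and second derivatives of $\phi$ at $\zeta_0$ are unchanged; uniform convergence then furnishes $\zeta_j \to \zeta_0$ at which $u_{p_j} - \phi$ attains a local maximum. In the non-degenerate regime $D\phi(\zeta_0) \neq 0$, continuity forces $D\phi(\zeta_j) \neq 0$ for large $j$, so the viscosity subsolution inequality reads $\phi_t(\zeta_j) \leq \mathcal{A}_{p_j}\phi(\zeta_j)$; sending $j \to \infty$ gives $\phi_t(\zeta_0) \leq \Delta_\infty^N \phi(\zeta_0)$, as required.

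The main obstacle is the degenerate case $D\phi(\zeta_0) = 0$, because of the singular coefficient $|Du|^{2-p}$ and the switch to semicontinuous envelopes. I would handle this by invoking Lemma \ref{def:p-parabolic1} applied to the $\infty$-parabolic equation: it suffices to treat test functions satisfying both $D\phi(\zeta_0) = 0$ and $D^2\phi(\zeta_0) = 0$, with the target inequality simplifying to $\phi_t(\zeta_0) \leq 0$. At each approximating point $\zeta_j$ two subcases arise. If $D\phi(\zeta_j) \neq 0$, the full viscosity inequality together with the uniform bound $|\mathcal{A}_p\phi| \leq (n+1)\|D^2\phi\|$ yields $\phi_t(\zeta_j) \leq (n+1)\|D^2\phi(\zeta_j)\|$. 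If $D\phi(\zeta_j) = 0$, the envelope form of the condition yields $\phi_t(\zeta_j) \leq \frac{1}{p_j}\text{tr}(D^2\phi(\zeta_j)) + \frac{p_j-2}{p_j}\Lambda(D^2\phi(\zeta_j))$. Since $D^2\phi(\zeta_j) \to D^2\phi(\zeta_0) = 0$, both right-hand sides tend to $0$; passing to the limit forces $\phi_t(\zeta_0) \leq 0$, completing the argument.
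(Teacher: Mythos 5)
Your proposal is correct and follows the same overall stability argument as the paper (perturb the test function to a strict maximum, pick nearby touching points $\zeta_j\to\zeta_0$ for $u_{p_j}$, and pass $\mathcal{A}_{p_j}\phi \to \Delta_\infty^N\phi$ in the viscosity inequality), but it diverges in how the degenerate case $D\phi(\zeta_0)=0$ is handled. The paper splits into the two alternatives "$D\phi(\zeta_{p_j})\neq 0$ for all large $j$" and "$D\phi(\zeta_{p_j})=0$ for all large $j$" and passes to the limit separately in each; strictly speaking this dichotomy is not exhaustive (the gradient at $\zeta_j$ can oscillate between vanishing and not), and the paper also tacitly assumes the gradient of the limit point matches the gradient behaviour along the sequence. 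You instead invoke the Manfredi--Parviainen--Rossi style reduction (the analogue of Lemma~\ref{def:p-parabolic1} for the limit equation, which holds for the normalised $\infty$-parabolic operator but is not literally stated in the paper), so that only test functions with $D\phi(\zeta_0)=0$ \emph{and} $D^2\phi(\zeta_0)=0$ need be considered, with target $\phi_t(\zeta_0)\le 0$. At the approximating points you then bound whichever of the two viscosity inequalities happens to be in force by a quantity controlled by $\|D^2\phi(\zeta_j)\|$, which tends to zero, and conclude uniformly. This is more robust than the paper's argument and closes the oscillation gap, at the cost of appealing to the reduction lemma for the $\infty$-equation; you should cite that explicitly rather than reuse Lemma~\ref{def:p-parabolic1}, which is stated only for finite~$p$.
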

\begin{proof}
We show that viscosity subsolutions of \eqref{eq:evoplap} converge to viscosity subsolutions of \eqref{eq:inftylaplace}. The proof for supersolutions is similar.
%
We say that $u \in \operatorname{USC}(\Omega)$ is a \emph{viscosity subsolution} to \eqref{eq:norminftylaplace} if, for every function $\phi \in C^2(\Omega)$ such that $u-\psi$ has a maximum at $\zeta_0$, we have
\begin{equation*}
\begin{cases}
\phi_t(\zeta_{\infty})-\Delta_{\infty}^N \psi(\zeta_{\infty})  \leq 0, \ \text{for} \ D\phi(\zeta_{\infty}) \neq 0 \\
\phi_t(\zeta_{\infty}) -\Lambda(D^2\psi(\zeta_{\infty})) \leq 0, \ \text{for} \ D\phi(\zeta_{\infty}) =0.
\end{cases}
\end{equation*}
Assume that $u_{\infty}-\phi$ has a maximum at $\zeta_{\infty}$ for $\phi \in C^2(Q_T)$.

{\bf 1.} Assume first that $D\phi(\zeta_{p_j})\neq0$ for $j$ greater than some number $N$. By definition of viscosity subsolution, we then have

\begin{equation}
\label{eq:converges}
\phi_t(\zeta_{p_j})- \frac{1}{{p}_j}\Delta \phi(\zeta_{p_j}) -\frac{p_j-2}{p_j}\Delta^N_{\infty}\phi(\zeta_{p_j}) \leq 0. 
\end{equation}
Since $u_{p_j}\to u_{\infty}$ uniformly, standard arguments, cf \cite{lindqvist2016notes} gives that the maximum points $\zeta_{p_j}$ converge to a maximum point $\zeta_{\infty}$ of $u_{\infty}-\phi$. Hence, letting $j\to \infty$ in \eqref{eq:converges} we see that
\[
\phi_t(\zeta_{\infty})-\Delta_{\infty}^N\phi(\zeta_{\infty}) \leq 0.
\]

{\bf 2. } If $D\phi(\zeta_{p_j})=0$ for $j>N$, we have
\[
	\phi_t(\zeta_{p_j}) -\frac{1}{p}\text{tr}(D^2\phi(\zeta_{p_j}))-\frac{p-2}{p}\Lambda(D^2\phi(\zeta_{p_j}))\leq 0
\]
and arguing as in the first case, we get as $p_j \to \infty$;
\[
	\phi_t(\zeta_{\infty})-\Lambda(D^2\phi(\zeta_{\infty}))\leq 0.
\]
This shows that $u_{\infty}$ is indeed a viscosity subsolution of the normalised $\infty$-parabolic equation.
\end{proof}
\begin{remark}
The existence of a uniformly convergent subsequence of $u_p$ is not known to exist in general. Does finds such an example for the  initial-boundary value problem with smooth boundary data in \cite{does2009evolution}.
\end{remark}
\section{Exterior Sphere Condition}
\label{sec:exterior sphere}
\noindent We use the barrier characterization to prove Theorem \ref{thm:exterior sphere}. We repeat the result here for completeness.
\begin{theorem*}[Exterior sphere]
Let $\zeta_0 =(t_0, x_0) \in \partial \Omega$, and suppose that there exists a closed ball $\{(x, t) \ : \ |x-x'|^2+(t-t')^2 \leq R_0^2\}$ intersecting $\overline{\Omega}$ precisely at $\zeta_0$. Then $\zeta_0$ is regular, if the intersection point is \underline{not} the south pole, that is $(x_0, t_0)  \neq (x', t'-R_0)$.
\end{theorem*}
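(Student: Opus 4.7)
The plan is to invoke the barrier characterization (Theorem~\ref{thm:barrier}) together with the locality remark following it, so it suffices to construct a supersolution $w$ on a small neighborhood of $\zeta_0$ (relative to $\Omega$) with $w(\zeta_0)=0$ and $w>0$ elsewhere. Motivated by the classical heat-equation argument, I try the radial candidate
\[
w(x,t)=e^{-\alpha R_0^2}-e^{-\alpha\rho(x,t)^2},\qquad \rho(x,t)^2:=|x-x'|^2+(t-t')^2,
\]
with $\alpha>0$ to be chosen later. Because the closed ball touches $\overline{\Omega}$ only at $\zeta_0$, we have $\rho\geq R_0$ on $\overline{\Omega}$ with equality only at $\zeta_0$, so conditions (2) and (3) of Definition~\ref{def:barrier} are automatic. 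Everything reduces to verifying that $w$ is a viscosity supersolution of \eqref{eq:evoplap} in a small neighborhood $B\cap\Omega$ of $\zeta_0$; once this is done, cutting $w$ off against $m=\inf\{w:\zeta\in\partial B\cap\Omega\}$ and extending by the constant $m$ on $\Omega\setminus B$ produces a global barrier.

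The core computation is a direct one: writing $s=|x-x'|$ and $\tau=t-t'$, I obtain $w_t=2\alpha\tau\,e^{-\alpha\rho^2}$, $Dw=2\alpha(x-x')e^{-\alpha\rho^2}$, and $D^2w=2\alpha e^{-\alpha\rho^2}[I-2\alpha(x-x')\otimes(x-x')]$, from which
\[
w_t-\mathcal{A}_p w=\frac{2\alpha\,e^{-\alpha\rho^2}}{p}\Bigl[\,2\alpha(p-1)s^2+p\tau-(n+p-2)\,\Bigr].
\]
Parametrising $\zeta_0=(x',t')+R_0(\nu,\sigma)$ with $|\nu|^2+\sigma^2=1$, one has $s_0=R_0|\nu|$, $\tau_0=R_0\sigma$, and three cases appear. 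If $\sigma\neq\pm 1$, then $s_0>0$, so the term $2\alpha(p-1)s^2$ dominates the bracket for $\alpha$ sufficiently large and the supersolution inequality holds throughout a small neighborhood (where additionally $Dw\neq 0$, so the classical inequality applies). At the north pole ($\sigma=1$, $s_0=0$, $\tau_0=R_0$), the gradient vanishes on the axis, so I invoke the envelope formulation of Definition~\ref{def:viscsuper}: since $D^2w=2\alpha e^{-\alpha\tau^2}I$ when $x=x'$, the smallest eigenvalue coincides with the trace's eigenvalue, and the condition collapses to $p\tau-(n+p-2)\geq 0$, forcing the radius restriction $R_0\geq (n+p-2)/p$. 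At the south pole ($\sigma=-1$), the same specialization gives $-pR_0-(n+p-2)<0$, which cannot be repaired by any choice of $\alpha$, and this is precisely why the south pole must be excluded.

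The main obstacle I anticipate is the analysis on the axis $\{x=x'\}$, where $Dw=0$ and the bare equation \eqref{eq:evoplap} is singular. This is where the asymmetry between the north and the south pole is created: along the axis the quadratic term in $s$ in the bracket disappears, leaving only the linear term $p\tau-(n+p-2)$, whose sign is controlled by $\tau$. Carefully using the envelope definition (or equivalently Lemma~\ref{def:p-parabolic1}) to handle $Dw=0$, together with the simple observation that near any non-south-pole point of the exterior sphere we can keep $s$ bounded away from $0$, or else keep $\tau$ strictly positive and $R_0$ large, completes the verification and yields the barrier.
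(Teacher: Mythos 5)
Your proposal is correct and tracks the paper's own argument essentially step for step: the same radial barrier $w = e^{-\alpha R_0^2}-e^{-\alpha\rho^2}$, the same algebraic identity $w_t-\mathcal{A}_pw=\frac{2\alpha e^{-\alpha\rho^2}}{p}\bigl[2\alpha(p-1)s^2+p\tau-(n+p-2)\bigr]$, the same three-way split on the position of the contact point (equatorial, north pole with the radius restriction $R_0\geq(n+p-2)/p$, and the unrecoverable south pole), and the same use of the semicontinuous-envelope definition along the axis $x=x'$ where $Dw$ vanishes. The one step you state a little informally is the passage from the computation at $w$ itself to the test-function inequality when $D\phi(x',t)=0$: you write that ``the condition collapses'' because $D^2w$ is a scalar multiple of the identity, which is the right idea, but to be rigorous one should observe that $D^2\phi\leq cI$ implies every eigenvalue of $D^2\phi$ is $\leq c$, and since $\frac{1}{p}\operatorname{tr}(M)+\frac{p-2}{p}\lambda(M)=\frac{p-1}{p}\lambda_1(M)+\frac{1}{p}\sum_{i\geq 2}\lambda_i(M)$ is monotone in each eigenvalue (for all $p>1$), the desired inequality transfers from $w$ to $\phi$; the paper spells this out. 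You also parametrise the sphere by latitude $\sigma$, which is a slightly cleaner way of organising the cases than the paper's verbal distinction between $x_0\neq x'$ and the north pole, but it leads to the same two estimates and the same conclusion.
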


\begin{proof}
We use the exterior sphere to construct a suitable barrier function at $\zeta_0$. Define
\[
w(x, t) = \operatorname{e}^{-aR^2_0}-\operatorname{e}^{-aR^2}, \ R^2= |x-x'|^2+(t-t')^2,
\]
for a constant $a>0$ to be determined. Clearly $w(x_0, t_0)=0$, and close to $(x_0, t_0)$ we have 
\begin{equation}
\label{eq:ballestimates}
\delta <|x-x'|, \ \ -2R_0 <t-t'.
\end{equation}

We prove that $w$ is a viscosity supersolution. Calculating the derivatives, we get
\begin{equation}
\begin{split}
\label{eq:w-derivatives}
&Dw(x, t) = 2a\mathrm{e}^{-aR^2}(x-x'), 
\\ &w_t(x, t)=2a\mathrm{e}^{-aR^2}(t-t'), \\
&D^2w(x, t)=2a\mathrm{e}^{-aR^2}(\mathbb{Id}_n-2a(x-x')\otimes(x-x')). 
\end{split}
\end{equation}
This shows that $Dw=0$ precisely when $x=x'$. According to Definition \ref{def:viscsuper}, we need to check the cases $x=x'$ and $x\neq x'$ separately.

{\bf 1.} Assume that $x\neq x'$. Then the point of contact is not the north pole. It suffices to show that $w$ is a classical supersolution.
 Inserting the derivatives into \eqref{eq:evoplap} we get
\begin{align*}
 & w_t-\frac{1}{p}\Delta w +\frac{p-2}{p}\Delta_{\infty}^Nw  \\
&=2a(t-t')\operatorname{e}^{-aR^2}-\frac{1}{p}\operatorname{e}^{-aR^2}\left[(p-1)(2a-4a^2r^2)+2ar\frac{n-1}{r}\right] \\
&= 2a\operatorname{e}^{-aR^2}\left[(t-t')+2a\frac{p-1}{p}|x-x'|^2-\frac{p+n-2}{p}\right].
\end{align*}
In light of \eqref{eq:ballestimates}, we have 
\begin{equation*}
\label{eq:mustbepos}
w_t-\mathcal{A}_pw> 2a\operatorname{e}^{-aR^2}\left[-2R_0+2a\frac{p-1}{p}\delta^2-\frac{p+n-2}{p}\right]
\end{equation*}
For the right hand side of this to be positive, we must have
\[
-R_0+a\frac{p-1}{p}\delta^2>\frac{p+n-2}{2p},
\]
and choosing $a$ big enough to ensure this, shows that $w$ is superparabolic.

{\bf 2.} If the point of intersection is the north pole, i.e $(x_0, t_0) = (x', t'+R_0)$, we can find points arbitrarily close to the line $x=x'$ such that 
\[
w_t-\mathcal{A}_pw =2a\operatorname{e}^{-aR^2}\left[R_0-\frac{p+n-2}{p}\right] +\epsilon,
\]
for any $\epsilon >0$.
We see that we must demand that the radius $R_0$ satisfies
\[
R_0 \geq \alpha/2, \ \alpha = 2\frac{p+n-2}{p}
\]
for $w$ to be a barrier in this case. 

Assume now that $x=x'$. We need to verify that for every $\phi\in C^2(\Omega)$ touching $w$ from below at $(x', t)$ we have
\begin{equation}
\label{eq:w-visc-super}
\phi_t(x', t) \geq\frac{1}{p}\text{tr}(D^2\phi(x', t))+ \frac{p-2}{p} \lambda(D^2\phi(x', t)).
\end{equation}
Assume to the contrary that there is a $\phi$ such that $w-\phi$ has a minimum at $(x', t)$, but that
\[
\phi_t(x', t) <\frac{1}{p}\text{tr}(D^2\phi(x', t))+ \frac{p-2}{p} \lambda(D^2\phi(x', t)).
\]
Since $w-\phi$ has a minimum, we must have
\[
\phi_t(x', t)=u_t(x', t), \ D\phi(x', t)=Du(x', t), \ D^2u(x', t) \geq D^2\phi(x', t).
\]
This implies, for any $z\in \mathbb{R}^n$
\[
\langle D^2w\ z, z\rangle \geq \langle D^2\phi \ z, z\rangle
\]
and, since $D^2w$ is a scalar multiple of the identity matrix, 
\[
\text{tr}(D^2w)|z|^2\geq \text{tr}(D^2\phi)|z|^2
\]
at $(x', t)$. Hence
\begin{align*}
\frac{1}{p}\text{tr}(D^2w)|z|^2 +\frac{p-2}{p}\langle D^2w\ z, z\rangle \\
\geq \frac{1}{p}\text{tr}(D^2\phi)|z|^2 +\frac{p-2}{p}\langle D^2\phi\ z, z\rangle \\
\geq \frac{1}{p}\text{tr}(D^2\phi)|z|^2 +\frac{p-2}{p}\lambda(D^2\phi)|z|^2 \\
>\phi_t|z|^2=w_t|z|^2.
\end{align*}
Inserting $x=x'$ in \eqref{eq:w-derivatives} and dividing by $|z|^2$ this is
\[
\frac{1}{p}2an\operatorname{e}^{-aR^2}+\frac{p-2}{p}2a\operatorname{e}^{-aR^2} > 2a\operatorname{e}^{-aR^2}(t-t'),
\]
or
\[
\frac{n+p-2}{p}>(t-t').
\]
This is a contradiction because of our restriction on the radius, and hence \eqref{eq:w-visc-super} must hold, and $w$ is a supersolution even in this case.

The condition $R_0 \geq \alpha/2$ restricts the set of exterior spheres usable in a positive way. The author does not know if this restriction can be circumvented.

The exclusion of the south pole $(x_0, t_0)=(x', t'-R_0)$ in the above is strictly necessary, since then for $(x, t)$ close to $(x_0, t_0)$ we could have
\[
(t-t')<0 \ \text{and} \ |x-x'|=|x'-x'|=0,
\]
and so 
\begin{align*}
w_t-\mathcal{A}_pw =2a\operatorname{e}^{-aR^2}\left[(t-t')-\frac{p+n-2}{p}\right] <0
\end{align*}
for any positive $a$, since $p+n\geq 2$.
\end{proof}
Another way to see that it is necessary to exclude the south pole is to consider the Dirichlet problem on the cylinder $Q_T =Q\times (0, T)$.
\begin{example}
	Suppose that $f : \partial Q_T\to \mathbb{R}$ is continuous. Theorem \ref{thm:comparison} and Theorem 2.6 in \cite{banerjee2015dirichlet} gives the existence of a unique viscosity solution $h$ in $Q_T$.
	
	Now construct the upper and lower Perron solutions $\overline{H}_f$ and $\underline{H}_f$. Since both are $p$-parabolic in $Q_T$, uniqueness gives that $\underline{H}_f = \overline{H}_f=h$, regardless of what values we choose at that part of the boundary where $t=T$. Indeed, $h$ itself need not be in either the upper or lower class, because we may not have that either $h>f$ or $h<f$ on the plane $t=T$. However, if we define
	\begin{equation*}
	\tilde{h} =h(x, t)+\frac{\epsilon}{T-t},
	\end{equation*}
	we see that
	\begin{equation*}
	\tilde{h}_t-\mathcal{A}_p\tilde{h} =0+\frac{\epsilon}{(T-t)^2},
	\end{equation*}
	so $\tilde{h}$ is in $\mathcal{U}_f$ for $\epsilon >0$, and in $\mathcal{L}_f$ for $\epsilon <0$. Therefore, it is possible for every point on the top of the cylinder to be irregular. we can say that $f$ is \emph{resolutive} in this case.
\end{example} 
We provide another example of an irregular boundary point.
\begin{example}[Latest moment on heat balls]
	Recall the self-similar solution derived in Section \ref{sec:similarity1}. We define the \emph{fundamental} solution to \eqref{eq:evoplap} as
	\begin{equation*}
	H_p(x, t) = t^{-\frac{\alpha}{\beta}}\operatorname{e}^{-\frac{|x|^2}{\beta t}}.
	\end{equation*}
	Analogous to the heat equation and the $p$-parabolic equation, we define the \emph{normalised} $p$-\emph{parabolic} \emph{balls} by the  level sets
	\begin{equation}
	\label{eq:level sets}
	H_p(x_0-x, t_0-t) >c
	\end{equation}
	We want to prove that the latest moment, or "centre" $(x_0, t_0)$ of \eqref{eq:level sets} is \emph{not} a  regular point. Fix $c>0$. We can assume that $(x_0, t_0)=(0,0)$, so that \eqref{eq:level sets} reads
	\[
	(-t)^{-\frac{\alpha}{\beta}}\operatorname{e}^{-\frac{|x|^2}{\beta (-t)}}>c
	\]
	for $t<0$. 
	But this is equivalent to
	\[
	|x|^2<t(\frac{\log{c}}{\beta}+\alpha\log|t|),
	\]
	and this inequality defines a domain containing the one in the Petrovsky criterion \ref{thm:petrovsky}. Hence the origin must be irregular.
\end{example}
We prove that it suffices to consider arbitrary domains in the equivalent definition of $p$-parabolic functions. The proof follows the same idea as in \cite{kilpelainen1996dirichlet}.
\begin{lemma}
\label{thm:bettercomparison}
A function $u\in LSC(\Omega)\cap L^{\infty}(\Omega)$ is $p$-superparabolic if and only if
for each domain $\Sigma$ with compact closure in $\Omega$, and for each solution $h \in C(\overline{\Sigma})$ to \eqref{eq:evoplap}, the condition  $h \leq u $ on $\partial \Sigma$ implies  $h \leq u$ in $\Sigma$.
\end{lemma}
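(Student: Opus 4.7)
The first direction is trivial: each cylinder $Q_{t_1,t_2}$ in Definition~\ref{def:p-parabolic2} is a particular subdomain with compact closure in $\Omega$, so the general-domain comparison specialises to the cylindrical one. For the converse, suppose $u$ satisfies the cylindrical comparison, fix $\Sigma \Subset \Omega$, and let $h \in C(\overline\Sigma)$ solve \eqref{eq:evoplap} in $\Sigma$ with $h \leq u$ on $\partial\Sigma$. I argue by contradiction, assuming $M := \sup_{\overline\Sigma}(h-u) > 0$. Since $h \in C(\overline\Sigma)$ and $u \in \operatorname{LSC}(\overline\Sigma)$, the difference $h-u$ lies in $\operatorname{USC}(\overline\Sigma)$, hence attains the value $M$; and because $h-u \leq 0$ on $\partial\Sigma$, it does so at an interior point. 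Among all maximisers, pick $(x^\ast, t^\ast)$ with the smallest time coordinate~$t^\ast$. By this minimality, $\max_{\overline\Sigma \cap \{t=\tau\}}(h-u) < M$ strictly for every $\tau < t^\ast$, and the non-empty slice $E := \{x : (h-u)(x,t^\ast) = M\}$ is a compact subset of the open spatial set $S := \Sigma \cap \{t=t^\ast\}$.

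Since $S$ is a non-empty open subset of $\mathbb{R}^n$ (which can never itself be compact), we have $E \subsetneq S$, and I can choose an open $U \subset \mathbb{R}^n$ with $E \subset U$, $\overline U \subset S$ and $\partial U \cap E = \emptyset$, for instance $U = \{x : \operatorname{dist}(x,E) < \epsilon\}$ for $\epsilon$ small. For $\tau > 0$ small enough, the cylinder $Q := U \times (t^\ast-\tau, t^\ast+\tau)$ satisfies $\overline Q \subset \Sigma$. On the compact set $\partial U \times \{t^\ast\}$ we have $h-u < M$ pointwise, hence by USC there is $\delta_0 > 0$ with $\max_{\partial U \times \{t^\ast\}}(h-u) \leq M - \delta_0$. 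A standard USC–compactness covering argument propagates this bound to $h - u \leq M - \delta_0/2$ on the whole lateral part $\partial U \times [t^\ast - \tau, t^\ast + \tau]$ after shrinking $\tau$. On the bottom $\overline U \times \{t^\ast - \tau\}$, the strict minimality of $t^\ast$ combined with USC on this compact slice furnishes some $\delta_1 > 0$ with $h - u \leq M - \delta_1$. Setting $\delta := \min(\delta_0/2, \delta_1) > 0$, we obtain $h \leq u + (M - \delta)$ everywhere on the parabolic boundary $\partial_p Q$.

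The key algebraic ingredient is that $\partial_t - \mathcal{A}_p$ is invariant under constant shifts in $u$, so $u + (M - \delta)$ is again $p$-superparabolic in the sense of Definition~\ref{def:p-parabolic2}: if $k$ is any continuous solution on a cylinder with $k \leq u + (M-\delta)$ on the parabolic boundary, then $k - (M-\delta)$ is also a solution and is bounded above by $u$ there, so the assumed cylindrical comparison for $u$ gives $k - (M-\delta) \leq u$, i.e.\ $k \leq u + (M-\delta)$, inside. Applying this shifted comparison to $h$ on $Q$ yields $h \leq u + (M-\delta)$ throughout $Q$, contradicting the equality $(h-u)(x^\ast, t^\ast) = M$ at the interior point $(x^\ast, t^\ast) \in Q$. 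The main obstacle is the second paragraph: extracting a cylinder $Q$ on whose full parabolic boundary a uniform strict gap $h - u \leq M - \delta$ holds. The lateral gap requires choosing $U$ so that $\partial U$ misses the slice maximiser $E$, which is possible precisely because open subsets of $\mathbb{R}^n$ cannot be compact; the bottom gap rests on the strict minimality of $t^\ast$ and the compactness of time slices. Once this cylinder is in hand, the shift trick reduces everything to the cylindrical comparison already assumed.
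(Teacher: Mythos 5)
Your argument for the direction ``cylindrical comparison implies general-domain comparison'' is essentially correct and takes a genuinely different route from the paper's. The paper hints at a covering argument (cover $\{h \ge u+\epsilon\}$ by finitely many boxes), whereas you run a first-time maximum argument: pick a minimal-time interior maximiser of $h-u$, build a small cylinder around it whose parabolic boundary carries a uniform strict gap, and invoke the constant-shift invariance of $\partial_t-\mathcal{A}_p$. That is a clean, self-contained alternative, and the USC/compactness bookkeeping you carry out is the right way to get the gap $\delta>0$.

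The gap is in what you call the ``first direction.'' You claim that the general-domain comparison trivially specialises to the cylindrical one, but this is false, because the two statements impose comparison on \emph{different} boundaries. Definition~\ref{def:p-parabolic2} only requires $h\le u$ on the \emph{parabolic} boundary $\partial_p Q_{t_1,t_2}$ of the box --- the bottom and lateral sides --- with no hypothesis on the interior of the top slice $Q\times\{t_2\}$. The lemma's general-domain comparison requires $h\le u$ on the full Euclidean boundary $\partial\Sigma$. Specialising the latter to $\Sigma=Q_{t_1,t_2}$ gives a statement with a strictly \emph{stronger} hypothesis (control on the top is also demanded), so it cannot deliver the conclusion of Definition~\ref{def:p-parabolic2}: if $h$ is a solution with $h\le u$ on $\partial_p Q_{t_1,t_2}$ but $h>u$ at a point of the open top, the full-boundary comparison is silent, yet the definition still asserts $h\le u$ inside. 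The paper handles precisely this by truncating the box below a slanted hyperplane $P_\delta$ so that every boundary point of the truncated region is regular (exterior sphere condition, Theorem~\ref{thm:exterior sphere}), taking the upper Perron solution of the penalised boundary datum $\theta = h-\epsilon/(t_2+\tfrac{\delta}{2}-t)$, comparing it with $u$ via the assumed general-domain comparison, and then letting $\epsilon,\delta\to 0$. Something of this sort is unavoidable; ``specialise to cylinders'' does not close this direction.
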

\begin{proof}
Assume first that Definition \ref{def:p-parabolic2} holds. If $\Sigma$ is a box or finite union of boxes, the result is clearly true. The case where $\Sigma$ is arbitrary follows by covering the set $\{h \geq u+\epsilon\}$ with finitely many boxes.

For the other direction, let $Q_{t_1, t_2}$ be a box with closure in $\Omega$ and let $h \in C(\overline{Q}_{t_1, t_2})$ be $p$-parabolic, and so that $h \leq u$ on $\partial_pQ_{t_1, t_2}$. Assume that
\[
Q =(a_1, b_1) \times \cdots \times (a_n, b_n).
\]
Let $\delta>0$ be so that $\delta <t_2-t_1$, and choose a hyperplane $P_{\delta}$ such that the points $(x, t_2-\delta)$ with $x_1=a_1$ and $(y, t_2)$ with $y_1=b_1$ belong to $P_{\delta}$. let $\Sigma$ be the subset of  $Q_{t_1, t_2}$ that contains all the points below the hyperplane, that is all $(x, t)$ with $t<s$ and  $(x, s) \in P_{\delta}$.

The Exterior sphere condition Theorem \ref{thm:exterior sphere} immediately gives that every point on $\partial \Sigma$ is regular. Fix $\epsilon>0$, and choose $\delta$ so small that
\[
u(x, t) \geq h(x, t)-\frac{\epsilon}{t_2+\frac{\delta}{2}-t}
\]
for $(x, t) \in P_{\delta}\cap \Sigma$. Let $\overline{H}_{\theta}$ be the upper Perron solution in $\Sigma$ with
\[
\theta = h-\frac{\epsilon}{t_2+\frac{\delta}{2}-t}
\]
as boundary function.
Then $\overline{H}_{\theta}$ is continuous up to $\partial \Sigma$, and we have
\[
u\geq \overline{H}_{\theta}
\]
in all of $\Sigma$ since the inequality holds on $\partial \Sigma$. Hence
\[
u(x, t) \geq h(x, t)-\frac{\epsilon}{t_2+\frac{\delta}{2}-t}
\]
in $\Sigma$, and letting $\epsilon, \delta \to 0$, we get
\[
u \geq h
\]
in the box $Q_{t_1, t_2}$.
\end{proof}

\section{The Petrovsky Criterion}
\label{sec:petrovsky}
\noindent We provide the proof of the Petrovsky Criterion, repeated here for completeness.
\begin{theorem*}
	\label{thm:petrowski}
	The origin $(x, t)=(0, 0)$ is a regular point for \eqref{eq:evoplap}
	in the domain $\Omega$ enclosed by the hypersurfaces 
	\begin{equation}
	\label{eq:thedomain}
	\{(x, t)\in \mathbb{R}^n\times (-\infty, 0)\ : \ |x|^2 =-\beta t\log|\log|t||\} \ \text{and} \ \{t=-c\}, 
	\end{equation}
	for a small constant $0<c<1$. Recall that
	\[
	\beta =4\frac{p-1}{p}.
	\] 
\end{theorem*}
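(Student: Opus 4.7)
The plan is to invoke the barrier characterization (Theorem \ref{thm:barrier}): it suffices to exhibit a barrier $w$ at the origin. The natural building block is the self-similar fundamental solution
\[
H_p(x,t) = t^{-\alpha/\beta}\mathrm{e}^{-|x|^2/(\beta t)}
\]
from Section \ref{sec:severalsolutions}, with $\alpha=2(p+n-2)/p$ and $\beta=4(p-1)/p$ as in \eqref{eq:bestsolution}. The key observation, and the whole reason the constant $\beta$ is critical, is that on the Petrovsky curve $|x|^2=-\beta t\log|\log|t||$ one has the clean identity
\[
\mathrm{e}^{-|x|^2/(\beta(-t))}=\frac{1}{|\log|t||}.
\]
This suggests a barrier of the form
\[
w(x,t)=K\,(-t)\,|\log|t||\,\mathrm{e}^{-|x|^2/(\beta(-t))}-(-t),\qquad K>1,
\]
(possibly refined by lower order corrections) defined on $\Omega$, or by locality only on $\Omega\cap B$ for a small ball $B$ around the origin.

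First, I would check the qualitative properties required of a barrier by Definition \ref{def:barrier}. At the origin, both terms vanish since $(-t)|\log|t||\to 0$ as $t\to 0^-$, regardless of the approach direction, using that the exponential factor is bounded by $1$. On the Petrovsky curve, the exponential collapses to $1/|\log|t||$, so the first term is $K(-t)$ and $w=(K-1)(-t)>0$; inside the domain the exponential factor is strictly larger than $1/|\log|t||$, so $w>(K-1)(-t)>0$ there as well. On the flat piece $\{t=-c\}$ one gets a uniform lower bound by monotonicity in $|x|$, and on $\partial B\cap\overline{\Omega}$ strict positivity is automatic since we are away from $(0,0)$.

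Second, and this is where the main work lies, I would verify the supersolution inequality $w_t-\mathcal{A}_pw\ge 0$ in $\Omega$. Writing $s=-t$, $L=|\log s|$, $E=\mathrm{e}^{-|x|^2/(\beta s)}$ and $u=|x|^2/(\beta s)$, one has from the computations in Section \ref{sec:severalsolutions}
\[
\mathcal{A}_p(sL\,E)=sL\,E\!\left[-\frac{\alpha}{\beta s}+\frac{|x|^2}{\beta s^{2}}\right]
=L\,E\!\left[\frac{|x|^2}{\beta s}-\frac{\alpha}{\beta}\right],
\]
and a direct time differentiation yields an expression for $w_t$ in which the dangerous term $2Lu\,E$ appears. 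The domain constraint $u\le\log L$ is exactly what is needed to control this term: the factor $L\,\mathrm{e}^{-u}$ is bounded by $L/\mathrm{e}^{\log L}\cdot L =L$ so that after cancellation the net sign of $w_t-\mathcal{A}_pw$ is governed by a quantity that remains non-negative for all admissible $(x,t)$ provided $K$ is chosen sufficiently large (and $c$ sufficiently small). If the straightforward ansatz above turns out not to close the inequality globally, I would either absorb an additional factor $(-t)^\epsilon$ or add a multiple of $|x|^2$ to enforce positivity of the remainder, and then invoke the locality of barriers (Corollary \ref{cor:irregular} and the discussion preceding it) to restrict to $\Omega\cap B$, which relaxes the global constraints.

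The hard part is precisely this balancing: the operator $\mathcal{A}_p$ produces a term proportional to $|x|^2/s^2$ that grows as one approaches the Petrovsky curve, and one needs the exponential decay of $E$ together with the saturation $u=\log L$ on the curve to beat it. All of this information is encoded in the choice $\beta=4(p-1)/p$; any smaller coefficient in the curve would leave a positive residue in $w_t-\mathcal{A}_pw$, while any larger coefficient (the situation of Theorem \ref{thm:not regular}) would break the estimate. Once the supersolution property is established the conclusion is immediate from Theorem \ref{thm:barrier}.
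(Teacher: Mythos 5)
Your plan correctly identifies the barrier characterization and the role of the Petrovsky curve, but the specific barrier ansatz has a structural flaw that the proposed patches do not repair.

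The issue is the exponent. The similarity solution from Section \ref{sec:similarity1} involves $\mathrm{e}^{-|x|^2/(\beta t)}$ — with the \emph{signed} $t$ in the denominator — and the paper's barrier is of the form $f(t)\mathrm{e}^{-|x|^2/(\beta t)}+g(t)$ with $f<0$. For $t<0$ this exponential is $\mathrm{e}^{+|x|^2/(\beta|t|)}$, a growing factor, and the whole point of choosing the similarity exponent is the cancellation visible in \eqref{eq:p-parabolicw}: the $\frac{|x|^2}{\beta t^2}$ term from differentiating the exponential in time is exactly matched by the $\frac{|x|^2}{\beta t^2}$ term produced by $\mathcal{A}_p$, so $w_t-\mathcal{A}_p w$ has \emph{no} $|x|^2$-dependence outside the bounded exponential factor. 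Your ansatz replaces this by $\mathrm{e}^{-|x|^2/(\beta(-t))}$, which is the decaying exponential $\mathrm{e}^{-|x|^2/(\beta s)}$ with $s=-t>0$. Writing $\psi=\mathrm{e}^{-|x|^2/(\beta s)}$ and $u=|x|^2/(\beta s)$, a short computation gives $\psi_t=-\frac{|x|^2}{\beta s^2}\psi$ while $\mathcal{A}_p\psi=\psi\bigl(-\frac{\alpha}{\beta s}+\frac{|x|^2}{\beta s^2}\bigr)$, so in $w_t-\mathcal{A}_p w$ the two $|x|^2$-contributions now have the \emph{same} sign and add up, producing a term $-2KL\,u\,\psi$ (with $L=|\log|t||$) rather than cancelling. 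At the interior locus $|x|^2=\beta s$ (i.e.\ $u=1$), which lies inside $\Omega$ for all small $|t|$, one has $-2KL\,u\,\psi=-2KL/\mathrm{e}\to-\infty$ as $t\to 0^-$, while the remaining terms are bounded. So the proposed $w$ is very far from being a supersolution near the origin. Multiplying by $(-t)^\epsilon$ only rescales the whole display, and adding a multiple of $|x|^2$ shifts $w_t-\mathcal{A}_p w$ by a \emph{constant} (since $\mathcal{A}_p|x|^2=\alpha$), so neither patch can beat a quantity diverging to $-\infty$.

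The paper's proof avoids this by taking the exponent $\mathrm{e}^{-|x|^2/(\beta t)}$ from the similarity ansatz, a \emph{negative} prefactor $f(t)=-c\,|\log|t||^{-\delta-1}$, and a positive corrector $g(t)=|\log|t||^{-\delta}$; the choice $\delta=c\alpha/\beta$ then closes the supersolution inequality. Your observation that on the Petrovsky curve $|x|^2/(\beta|t|)=\log|\log|t||$ is correct and is indeed what makes $\beta=4(p-1)/p$ critical, but it has to be used with the fundamental-solution exponent, not its reciprocal. As written, your proof does not go through.
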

According to Theorem \ref{thm:barrier}, it suffices to find a barrier function $w$ so that
	\begin{enumerate}
		\item $w$ is a supersolution in $\Omega$,
		\item $w(x, t) >0$ for $(x, t)\in \Omega$,
		\item $\liminf_{(y, s) \to (x, t)}w(y, s) >0$ for $(x, t) \neq (0, 0) \in \partial \Omega$,
		\item $\lim_{(x, t) \to (0, 0)}w(x, t) =0.$
	\end{enumerate}
Our barrier will be on the form
\begin{equation*}
\label{eq:barrierform}
w(x, t)=f(t)e^{-\frac{|x|^2}{\beta t}}+ g(t),
\end{equation*}
for smooth functions $f$ and $g$. Differentiating formally, we get
\begin{equation}
\label{eq:timederivative}
w_t(x, t)=e^{-\frac{|x|^2}{\beta t}}\left(f'(t)+{\frac{|x|^2}{\beta t^2}}f(t)\right)+g'(t),
\end{equation}
\begin{equation}
\label{eq:gradient}
Dw(x, t)=-x\frac{2f(t)}{\beta t}e^{-\frac{|x|^2}{\beta t}},
\end{equation}
and
\begin{equation}
\label{eq:hessian}
\begin{split}
D^2w(x, t)&=
-\frac{2f(t)}{\beta t}e^{-\frac{|x|^2}{\beta t}}\mathbb{Id}_n+\frac{4f(t)}{t^2\beta^2}e^{-\frac{|x|^2}{\beta t}}x\otimes x \\
&=\frac{2f(t)}{\beta t}e^{-\frac{|x|^2}{\beta t}}\left(-\mathbb{Id}_n+\frac{2}{\beta t}x \otimes x\right).
\end{split}
\end{equation}
From \eqref{eq:hessian} we see that
\begin{equation}
\label{eq:trace}
\text{tr}(D^2w(x, t))=\frac{2f(t)}{\beta t}e^{-\frac{|x|^2}{\beta t}}\left(-n+\frac{2|x|^2}{\beta t}\right).
\end{equation}
From \eqref{eq:gradient} and \eqref{eq:hessian}, (or observing that $w(x, t)=G(r, t)$, and so $\Delta_{\infty}^Nw =G_{rr}$), we get
\begin{equation}
\label{eq:inftylaplace_p}
\left\langle D^2w\frac{Dw}{|Dw|}, \frac{Dw}{|Dw|} \right\rangle =\frac{2f(t)}{\beta t}e^{-\frac{|x|^2}{\beta t}}\left(-1+\frac{2|x|^2}{\beta t}\right).
\end{equation}
From \eqref{eq:inftylaplace_p} and \eqref{eq:trace}, we calculate
\begin{align*}
\mathcal{A}_pw =&\frac{1}{p}\text{tr}(D^2w) +\frac{p-2}{p}\left\langle D^2w\frac{Dw}{|Dw|}, \frac{Dw}{|Dw|} \right\rangle, \\
&=\frac{1}{p}\cdot\frac{2f(t)}{\beta t}e^{-\frac{|x|^2}{\beta t}}\left(-n+\frac{2|x|^2}{\beta t}\right) +\frac{p-2}{p}\cdot\frac{2f(t)}{\beta t}e^{-\frac{|x|^2}{\beta t}}\left(-1+\frac{2|x|^2}{\beta t}\right) \\
&=\frac{2f(t)}{\beta t}e^{-\frac{|x|^2}{\beta t}}\left(-\frac{n}{p}-\frac{p-2}{p}+\frac{2|x|^2}{\beta t}\left(\frac{1}{p}+\frac{p-2}{p}\right)\right)\\
&=\frac{f(t)}{\beta t}e^{-\frac{|x|^2}{\beta t}}\left(-2\left(\frac{n+p-2}{p}\right)+\frac{|x|^2}{\beta t}\left(4\frac{p-1}{p}\right)\right) \\
&=\frac{f(t)}{\beta t}e^{-\frac{|x|^2}{\beta t}}\left(-\alpha+\frac{|x|^2}{t}\right),
\end{align*}
where 
\[
\alpha =2\frac{n+p-2}{p}.
\]
This, together with \eqref{eq:timederivative}, gives
\begin{equation}
\label{eq:p-parabolicw}
\begin{split}
w_t-\mathcal{A}_pw \\
=&e^{-\frac{|x|^2}{\beta t}}\left(f'(t)+{\frac{|x|^2}{\beta t^2}}f(t)\right)+g'(t)-\frac{f(t)}{\beta t}e^{-\frac{|x|^2}{\beta t}}\left(-\alpha+\frac{|x|^2}{t}\right) \\
=&e^{-\frac{|x|^2}{\beta t}}\left(f'(t)+\frac{|x|^2f(t)}{\beta t^2}+\frac{\alpha f(t)}{\beta t}-\frac{|x|^2f(t)}{\beta t^2}\right) +g'(t) \\
=&e^{-\frac{|x|^2}{\beta t}}\left(f'(t)+\frac{\alpha f(t)}{\beta t} +g'(t)e^{\frac{|x|^2}{\beta t}}\right).
\end{split}
\end{equation}
Choose
\[
f(t)=-c\frac{1}{|\log|t||^{\delta+1}}, \ g(t)=\frac{1}{|\log|t||^{\delta}},
\]
for constants $0<c<1$, $\delta$ to be determined. We are now in position to prove the following theorem:
\begin{theorem}
	The smooth function $w:\Omega \to \mathbb{R}$ given by
	\begin{equation}
	\label{eq:the barrier}
	w(x, t)=-c\frac{1}{|\log|t||^{\delta+1}}e^{-\frac{|x|^2}{\beta t}} +\frac{1}{|\log|t||^{\delta}}
	\end{equation}
	is a barrier at $(0, 0)$.
\end{theorem}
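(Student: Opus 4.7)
The plan is to verify the four barrier conditions (1)--(4) listed above. Write $u:=|\log|t||$ and $s:=|x|^{2}/(\beta(-t))\geq 0$ for brevity. Since $t<0$ throughout $\Omega$, one has $e^{-|x|^{2}/(\beta t)}=e^{+s}$, and $\Omega$ is exactly the set where $s<\log u$ and $-c<t<0$ (assume $c$ small enough that $|\log|t||>1$ in $\Omega$). In these coordinates $w$ factorises as $w(x,t)=u^{-\delta}\bigl(1-\tfrac{c}{u}e^{s}\bigr)$.

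The positivity and boundary conditions follow by inspection. In $\Omega$ we have $e^{s}/u<1$ together with $0<c<1$, so $w>0$. Since $|w|\leq(1+c)u^{-\delta}$ and $u\to\infty$ as $(x,t)\to(0,0)$, $w$ decays to zero at the origin. At a boundary point $\eta\neq(0,0)$, $|\log|t||$ is bounded above and below near $\eta$: on the lateral surface $\{s=\log u\}$ the barrier reduces to $(1-c)u^{-\delta}>0$, and on the lid $\{t=-c\}$ one checks that $c\,e^{|x|^{2}/(\beta c)}/|\log c|<1$ on the closed disc $|x|^{2}\leq\beta c\log|\log c|$, because this translates to $|x|^{2}<\beta c\log(|\log c|/c)$, which holds since $\log(|\log c|/c)=\log|\log c|+|\log c|>\log|\log c|$. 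In each case $\liminf_{\zeta\to\eta}w(\zeta)>0$.

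The substantive step, and the main obstacle, is verifying that $w$ is a viscosity supersolution of~\eqref{eq:evoplap} in all of $\Omega$. Plugging $f(t)=-cu^{-(\delta+1)}$ and $g(t)=u^{-\delta}$ into~\eqref{eq:p-parabolicw} and using $u'(t)=-1/t$ yields
\[
w_{t}-\mathcal{A}_{p}w \;=\; \frac{1}{(-t)\,u^{\delta+1}}\left\{c\,e^{s}\left(\frac{\delta+1}{u}+\frac{\alpha}{\beta}\right)-\delta\right\}.
\]
The crucial point is that since $t<0$, the exponential $e^{s}=e^{-|x|^{2}/(\beta t)}$ is at least $1$, and ranges up to $u$ on the lateral surface---the opposite monotonicity from the $t>0$ case---so the braced expression is bounded below by $c\alpha/\beta-\delta$. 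Choosing $\delta\in(0,\,c\alpha/\beta)$ therefore forces $w_{t}-\mathcal{A}_{p}w>0$ uniformly in $\Omega$ at every point where $Dw\neq 0$, i.e.\ whenever $x\neq 0$.

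It remains to handle the line $\{x=0\}$, where $Dw=0$ and the second branch of Definition~\ref{def:viscsuper} applies. Radial symmetry of $w$ in $x$ makes $D^{2}w|_{x=0}$ a scalar multiple of the identity, so $\lambda(D^{2}w)=\tfrac{1}{n}\mathrm{tr}(D^{2}w)$ and the supersolution inequality reduces to $\delta\leq c\alpha/\beta+c(\delta+1)/u$, which is strictly weaker than the standing choice $\delta<c\alpha/\beta$. Having verified (1)--(4), Theorem~\ref{thm:barrier} delivers regularity of the origin.
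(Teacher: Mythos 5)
Your proof is correct and follows essentially the same route as the paper: the same computation of $w_t-\mathcal{A}_pw$, the same use of $e^{-|x|^2/(\beta t)}\geq 1$ for $t<0$, the same choice of $\delta$ near $c\alpha/\beta$ (you take $\delta<c\alpha/\beta$ strictly, the paper takes equality; both work), and the same split into $Dw\neq 0$ and $Dw=0$. The one small omission is at $x=0$: the paper completes the viscosity verification by explicitly passing from the pointwise inequality for the smooth $w$ to a $C^2$ test function $\phi$ touching $w$ from below, using $D^2w(0,t)\geq D^2\phi(0,t)$ together with $D^2w(0,t)$ being a scalar multiple of the identity -- you record the scalar-multiple fact that makes this transfer go through, but leave the transfer itself implicit.
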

\begin{proof}
	We check the requirements listed in Definition \ref{def:barrier}.
	
	{\bf 1.} We must check that $w$ is a viscosity supersolution in $\Omega$. Equation \eqref{eq:gradient} shows that $Dw=0$ precisely when $x=0$, so assume first that $x\neq 0$. It suffices to show that $w$ is a classical solution in this case. We first differentiate $f$ and $g$:
	\[
	f'(t)=-c(\delta+1)\frac{1}{t|\log|t||^{\delta+2}}, \ g'(t)=\delta\frac{1}{t|\log|t||^{\delta+1}}.
	\]
	Inserting the derivatives into \eqref{eq:p-parabolicw} gives
	\begin{align*}
	&w_t-\mathcal{A}_pw \\
	=&e^{-\frac{|x|^2}{\beta t}}\left(-c(\delta+1)\frac{1}{t|\log|t||^{\delta+2}}-c\frac{\alpha}{\beta}\frac{1}{t|\log|t||^{\delta+1}}+\delta\frac{1}{t|\log|t||^{\delta+1}}e^{\frac{|x|^2}{\beta t}}\right) \\
	=&\frac{1}{t|\log|t||^{\delta+1}}e^{-\frac{|x|^2}{\beta t}}\left(\frac{-c(\delta+1)}{|\log|t||}-c\frac{\alpha}{\beta}+\delta e^{\frac{|x|^2}{\beta t}}\right).
	\end{align*}
	$t$ is negative, so $e^{\frac{|x|^2}{\beta t}}<1$, hence
	\begin{align*}
	w_t-\mathcal{A}_pw >\frac{1}{t|\log|t||^{\delta+1}}e^{\frac{|x|^2}{\beta t}}\left(\frac{-c(\delta+1)}{|\log|t||}-c\frac{\alpha}{\beta}+\delta\right)
	\end{align*}
	For this to be positive, the expression inside the parentheses must be negative. Choosing 
	\begin{equation}
	\label{eq:choosing_delta}
	\delta =c\frac{\alpha}{\beta}
	\end{equation}
	ensures this, and with this choice $w$ is superparabolic in this case.
	
	Assume that $x=0$ so that $Dw=0$. From \eqref{eq:hessian} and \eqref{eq:trace} we deduce
	\[
	\text{tr}(D^2w(0, t))=c\frac{2n}{\beta t|\log|t||^{\delta+1}},
	\]
	and
	\[
	\lambda(D^2w(0, t))=c\frac{2}{\beta t|\log|t||^{\delta+1}}.
	\]
	Since
	\[
	w_t(0, t)=f'(t)+g'(t) =\frac{1}{t|\log|t||^{\delta+1}}\left(\frac{-c(\delta+1)}{|\log|t||}+\delta\right),
	\]
	Definition \ref{def:viscsuper} demands that we verify 
	\begin{align*}
	&\frac{1}{t|\log|t||^{\delta+1}}\left(\frac{-c(\delta+1)}{|\log|t||}+\delta\right) \\
	\geq &\frac{1}{p}\cdot c\frac{2n}{\beta t|\log|t||^{\delta+1}} +\frac{p-2}{p}\cdot c\frac{2}{\beta t|\log|t||^{\delta+1}}
	\end{align*}
	for $t<0$.
	This is equivalent to
	\[
	\frac{-c(\delta+1)}{|\log|t||}+\delta\leq \frac{2cn}{p\beta} +\frac{2c(p-2)}{p\beta} \\
	=2c\frac{p+n-2}{p\beta}=c\frac{\alpha}{\beta}.
	\]
	Because of our choice of $\delta$ in \eqref{eq:choosing_delta}, the above inequality is satisfied for all $t$, and \eqref{eq:the barrier} satisfies the second condition in Definition \eqref{def:viscsuper}. 
	
	It remains to show that $w$ is a viscosity supersolution. Let $\phi\in C^2(\Omega)$ touch $w$ from below at $(0, t)$. Since $w-\phi$ has a minimum at $(0, t)$, we have
	\[
	w_t=\phi_t, \ Dw=D\phi, \ D^2w>D^2\phi
	\]
	at this point. Since $D^2w(0, t)=\frac{2f(t)}{\beta t}\mathbb{Id}_n$, a scalar multiple of the identity matrix, this implies
	\[
	\lambda(D^2w((0, t))>\Lambda(D^2\phi(0, t))>\lambda(D^2\phi(0, t)),
	\]
	where $\lambda$ is the smallest eigenvalue and $\Lambda$ is the greatest. Since 
	\[
	\text{tr}(D^2\phi(0, t)) =\sum_{i=1}^{n}\lambda_i(D^2\phi(0, t)),
	\]
	we get
	\begin{align*}
	\phi_t(0, t)&=w_t(0, t)   \\
	&\geq\frac{1}{p}\text{tr}(D^2w(0, t))-\frac{p-2}{p}\lambda(D^2w(0, t)) \\
	&\geq\frac{1}{p}\text{tr}(D^2\phi(0, t))-\frac{p-2}{p}\lambda(D^2\phi(0, t)),
	\end{align*} 
	which implies that $w$ is indeed a viscosity supersolution.
	
	{\bf 2.} Since \eqref{eq:thedomain} implies $-\frac{|x|^2}{\beta t}<\log|\log|t||$, we see
	\[
	w(x, t)>-c\frac{1}{|\log|t||^{\delta+1}}e^{\log|\log|t||} +\frac{1}{|\log|t||^{\delta}} =\frac{1-c}{|\log|t||^{\delta}}>0,
	\]
	for $0<c<1$, as desired, and (2) in the Definition holds.
	
	{\bf 3.} $w$ is continuous in $\overline{\Omega}$, so we only need to check that the restriction of $w$ to $\partial \Omega$ is positive. We see
	\[
	\left.w(x, t)\right|_{\partial \Omega}=-c\frac{1}{|\log|t||^{\delta+1}}e^{\frac{-\beta t\log|\log|t||}{\beta t}} +\frac{1}{|\log|t||^{\delta}} =\frac{1-c}{|\log|t||^{\delta}}>0.
	\]
	
	{\bf 4.} We see that 
	\[
	\lim_{t \to 0^-}f(t)=\lim_{t \to 0^-}g(t)=0.
	\]
	Since $|x|^2<-\beta t\log|\log|t||\to 0$, we see
	\[
	e^{-\frac{|x|^2}{\beta t}} =\mathcal{O}(|\log|t||)
	\]
	as $t \to 0^-$. Therefore
	\[
	\lim_{(x, t)\to (0, 0^-)}w(x, t)=0,
	\]
	and (4) in the Definition is satisfied.
	
	Together, these points show that \eqref{eq:the barrier} is indeed a barrier at $(0, 0)$, and hence the origin is a regular point for the domain \eqref{eq:thedomain}.
\end{proof}
\begin{remark}
Since $\beta \to 4$ as $p\to \infty$, we see that \eqref{eq:thedomain} converges to the Petrovsky criterion for the $\infty$-parabolic equation \eqref{eq:infinitypetrovsky}. Note also that the result is completely independent of the number of spatial variables $n$.
\end{remark}
We now turn to the proof that Theorem \ref{thm:petrovsky} is sharp; any constant greater than $\beta$ in \eqref{eq:thedomain} will produce domain containting $\Omega$ where the origin is irregular.
\begin{theorem*}
 The origin is \emph{not} a regular point for the domain $\Omega$ enclosed by the hypersurfaces
 \begin{equation}
 \label{eq:irregulardomain}
 \begin{split}
 &\{(x, t)\in \mathbb{R}^n\times(-\infty, 0) \ : \ |x|^2 =-\beta(1+\epsilon)t\log|{\log{|t|}}|\} \\ &\text{and}\{t=-c \},
 \end{split}
 \end{equation}
for any $\epsilon >0$.
 \end{theorem*}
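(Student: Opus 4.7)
My plan is to prove Theorem~\ref{thm:not regular} by showing that no barrier at the origin can exist for $\Omega_\epsilon$; by the characterization in Theorem~\ref{thm:barrier}, the origin is then irregular. The strategy closely parallels the proof of Theorem~\ref{thm:petrovsky}, but exploits the mismatch between the supersolution constraint and the positivity requirement on the wider boundary: the extra factor $(1+\epsilon)$ forces any candidate barrier to fail either the supersolution condition or positivity on $\partial \Omega_\epsilon$ near the origin.

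The first step is a direct computation. The barrier $w(x, t) = -c|\log|t||^{-(\delta+1)} \mathrm{e}^{-|x|^2/(\beta t)} + |\log|t||^{-\delta}$ from Theorem~\ref{thm:petrovsky}, with $\delta = c\alpha/\beta$ and $c \in (0, 1)$, remains a viscosity supersolution on the larger domain $\Omega_\epsilon$ (the computation in Section~\ref{sec:petrovsky} is purely pointwise). However, substituting $|x|^2 = \beta(1+\epsilon)|t|\log|\log|t||$ on the curved part of $\partial\Omega_\epsilon$ gives $\mathrm{e}^{-|x|^2/(\beta t)} = |\log|t||^{1+\epsilon}$, whence
\[
w|_{\partial\Omega_\epsilon} = (1 - c|\log|t||^{\epsilon})\,|\log|t||^{-\delta},
\]
which tends to $-\infty$ as $t \to 0^-$ for every $c > 0$ and $\epsilon > 0$. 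So $w$ is not positive on $\partial\Omega_\epsilon$ and therefore fails to be a barrier. A natural modification $\tilde w = -c|\log|t||^{-(\delta+1+\mu)}\mathrm{e}^{-|x|^2/(\beta t)} + |\log|t||^{-\delta}$ can be made positive on $\partial\Omega_\epsilon$ only when $\mu \geq \epsilon$, but reworking the identity \eqref{eq:p-parabolicw} shows that the supersolution condition at $x=0$ then reads $\delta \leq \frac{c(\delta+1+\mu)}{|\log|t||^{\mu+1}} + \frac{c\alpha}{\beta|\log|t||^{\mu}}$, whose right-hand side vanishes as $|t| \to 0$ for $\mu > 0$, forcing $\delta = 0$ and hence $\tilde w \not\to 0$ at the origin. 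Thus the Petrovsky-form family admits no barrier at the origin of $\Omega_\epsilon$.

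The main obstacle is the second step: concluding nonexistence of \emph{all} barriers from the nonexistence in this explicit family. I would attempt a comparison argument: let $w^*$ be any hypothetical barrier and consider the subparabolic function $V_c = -w$ (subsolution by the oddness relation $\mathcal{A}_p(-u) = -\mathcal{A}_p u$), restricted to the subregion $\{V_c < M\} \cap \Omega_\epsilon$ on which it is bounded. Applying Lemma~\ref{thm:bettercomparison} to $V_c$ and the supersolution $w^* + M$ yields $V_c \leq w^* + M$ throughout this subregion; evaluating along $(0, t_k) \to (0, 0)$ and letting $c \uparrow 1$, $M$ grow in a controlled way, produces a positive lower bound on $w^*(0, t_k)$, contradicting $w^*(0, 0) = 0$. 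A cleaner alternative, if applicable, is to invoke the tusk-type irregularity result in \cite{bjorn2017tusk} for a suitable subdomain of $\Omega_\epsilon$ and then apply Corollary~\ref{cor:irregular}. Either route hinges on controlling the exhaustion of $\Omega_\epsilon$ by subregions where $V_c$ is bounded, which is where I expect the technical difficulty to concentrate.
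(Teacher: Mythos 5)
Your first two steps correctly show that the explicit Petrovsky barrier and its natural one-parameter modification fail on $\Omega_\epsilon$, but this only rules out a particular family of ansätze; it does not prove nonexistence of \emph{all} barriers. The weight of the argument therefore rests entirely on your third step, and there is a genuine gap there. The comparison scheme you sketch — $V_c = -w \le w^* + M$ on $\{V_c < M\}\cap\Omega_\epsilon$ — does not yield a contradiction, because $V_c(0,t) = -w(0,t) = c\,|\log|t||^{-(\delta+1)} - |\log|t||^{-\delta} \to 0$ as $t\to 0^-$; so evaluating along $(0,t_k)\to(0,0)$ gives only $0 \le w^*(0,0)+M = M$, which is always true. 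There is no mechanism by which "letting $c\uparrow 1$ and $M$ grow in a controlled way" produces a positive lower bound on $w^*(0,t_k)$, since the sub-solution you compare against vanishes at the origin exactly as fast as any barrier must. The alternative appeal to \cite{bjorn2017tusk} is also unjustified without checking that a tusk actually fits inside $\Omega_\epsilon$ with vertex at the origin.

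The paper's proof is structurally different and avoids the "nonexistence of all barriers" obstacle entirely. It constructs a \emph{sub}solution $w(x,t)=f(t)\mathrm{e}^{-k|x|^2/(\beta t)}+g(t)$ with $f(t)=-|\log|t||^{-(1+\epsilon_1)}$, $g(t)=1/\log|\log|t||$, $k\in(\tfrac{1}{2},1)$, and takes $\tilde\Omega$ to be a sublevel set $\{w<m\}$, $m<0$. The level-set computation shows $\tilde\Omega\subset\Omega_\epsilon$ for suitable $k,\epsilon_1$. Inside $\tilde\Omega$, $w$ is a subsolution bounded above by $m<0$ on $\partial\tilde\Omega$ near the origin, yet $w(0,t)\to 0$ along interior points — so $\limsup$ along interior points strictly exceeds the boundary limit. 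Comparing the upper Perron solution against $w$ for suitable boundary data then shows $(0,0)$ is irregular for $\tilde\Omega$, and Corollary~\ref{cor:irregular} transfers the irregularity to $\Omega_\epsilon$. This gives a direct witness of irregularity (boundary data not attained) rather than trying to prove a universal nonexistence statement. If you want to salvage your line of reasoning, you should replace step 3 with this level-set construction: the key insight you are missing is to build a \emph{smaller} domain from a subsolution's sublevel set, so that the subsolution itself provides the obstruction, rather than trying to rule out barriers on $\Omega_\epsilon$ directly.
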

 \begin{proof}
 	The proof proceeds by constructing a domain $\tilde{\Omega}$ contained in $\Omega$, with the origin as common boundary point. We then show that $(0, 0)$ is irregular for $\tilde{\Omega}$, and Lemma \ref{cor:irregular} then implies that $(0, 0)$ regarded as a boundary point of $\Omega$ is irregular, too. 
  We shall construct a smooth function $w$ so that
 \begin{enumerate}
 	\item $w$ is subparabolic in $\tilde\Omega$,
 	\item $w$ is continuous on $\overline{\tilde\Omega}\setminus\{(0, 0)\}$,
 	\item The upper limit of $w$ at interior points converging to $(0, 0)$ is greater than its upper limit for the points converging to $(0,0)$ along the boundary.
 \end{enumerate}
 To see why the existence of such a $w$ implies that the origin is irregular, consider the boundary data $f: \partial \tilde\Omega \to \mathbb{R}$ defined as follows. Let $f=w$ near $(0, 0)$, and set 
 \[
 f(0, 0)=\lim_{\partial \tilde\Omega\ni(x, t)\to(0, 0)}v(x, t).
 \]
 As we shall see, this limit exists.
 For the rest of the boundary, continuously extend $f$ to a large constant $b$. 
 
 If $b$ is large enough, the comparison principle implies that every function $\overline{u}\in \mathcal{U}_f$ which satisfies $\overline{u}\geq f$ on $\partial \tilde\Omega$ also satisfies $\overline{u}\geq w$ in $\tilde\Omega$ since $w$ is a subsolution by (1). Taking the infimum over all such $\overline{u}$, we see $\overline{H}_f\geq w$ in $\tilde\Omega$, and hence by point (3) in the definition of $w$;
 \begin{align*}
 \limsup_{\tilde\Omega\ni(x, t) \to (0, 0)}\overline{H}_f(x, t) \geq &\limsup_{\tilde\Omega\ni(x, t) \to (0, 0)} w(x, t) \\ 
 >&\limsup_{\partial \tilde\Omega\ni(y, s) \to (0, 0)}w(y, s) =f(0, 0),
 \end{align*} 
 and so $(0, 0)$ is not a regular point for $\tilde \Omega$.
 
 Our function $w$ will be on the form
 \begin{equation}
 \label{eq:irregularitatsbarriare}
 w(x, t)=f(t)\mathrm{e}^{\frac{-|x|^2}{\beta t}k} +g(t),
 \end{equation}
 for suitable functions $f$ and $g$. Here $k\in (\frac{1}{2}, 1)$ will be chosen later, and $-1<t<0$. Indeed, we shall choose $t$ to be very close to 0. Calculating, we get
 \[
w_t(x, t ) = f'(t)\mathrm{e}^{\frac{-|x|^2}{\beta t}k}+\frac{f(t)|x|^2k}{\beta t^2}\mathrm{e}^{\frac{-|x|^2}{\beta t}k}+g'(t)
 \]
 and
\begin{align*}
& w_r = -f(t)\frac{2rk}{\beta t}\mathrm{e}^{\frac{-|x|^2}{\beta t}k}, \\
& w_{rr} = f(t)\mathrm{e}^{\frac{-|x|^2}{\beta t}k}\left(\frac{4r^2k^2}{\beta^2 t^2}-\frac{2k}{\beta t}\right)
\end{align*}
Inserting this into \eqref{eq:evoplap}, we get
\begin{equation}
\label{eq:choosefg}
w_t-\mathcal{A}_pw = \mathrm{e}^{\frac{-|x|^2}{\beta t}k}\left[f'(t)+f(t)\frac{|x|^2(k-k^2)}{\beta t^2}+f(t)\frac{\alpha k}{\beta t}\right]+g'(t),
\end{equation}
with $\alpha$ as in \eqref{eq:bestsolution}.
Choose 
\[
 f(t)=\frac{-1}{{|\log{|t|}|}^{1+\epsilon_1}} \ \text{and} \ g(t)= \frac{1}{\log|\log{|t|}|},  
\]
where $\epsilon_1$ is a positive constant.

{\bf The case $\bf{x\neq 0}$.} We see that $Dw=0$ precisely when $x=0$.  We show that \eqref{eq:irregularitatsbarriare} is a \emph{classical} subsolution when $x \neq 0$.

 Inserting derivatives into \eqref{eq:choosefg}, we get
\begin{equation}
\label{eq:bigstuff}
\begin{split}
w_t-\mathcal{A}_pw = \mathrm{e}^{\frac{-|x|^2}{\beta t}k}\left[\frac{-(1+\epsilon_1)}{t|\log{|t|}|^{2+\epsilon_1}}-\frac{|x|^2(k-k^2)}{\beta t^2|\log{|t|}|^{1+\epsilon_1}} - \right. \\\left. \frac{\alpha k}{\beta t|\log{|t|}|^{1+\epsilon_1}}+\mathrm{e}^{\frac{|x|^2}{\beta t}k}\frac{1}{t\cdot \log^2{|\log{|t|}|}\cdot|\log{|t|}|}\right].
\end{split}
\end{equation}
Multiplying by $t\cdot |\log{|t|}|^{1+\epsilon_1}<0$, we see that the sign of \eqref{eq:bigstuff} coincides with the sign of
\begin{align*}
-\frac{1+\epsilon_1}{\log{|t|}}+\frac{|x|^2}{\beta t}(k-k^2)+\frac{\alpha k}{\beta}-\mathrm{e}^{\frac{|x|^2}{\beta t}k}\frac{|\log{|t|}|^{\epsilon_1}}{\log^2{|\log{|t|}|}}.
\end{align*}
We can choose $|t|$ small enough that 
\[
\left|\frac{1+\epsilon_1}{\log{|t|}}\right|<\frac{\alpha k}{\beta},
\]
and then 
\begin{equation}
\label{eq:negative}
\frac{|x|^2}{\beta t}(k-k^2)+\frac{2\alpha k}{\beta}-\mathrm{e}^{\frac{|x|^2}{\beta t}k}\frac{|\log{|t|}|^{\epsilon_1}}{\log^2{|\log{|t|}|}} <0.
\end{equation}
This inequality is satisfied if $|x|$ is so small that 
\begin{equation}
\label{eq:x small}
\frac{2\alpha k}{\beta } <\mathrm{e}^{\frac{|x|^2}{\beta t}k}\frac{|\log{|t|}|^{\epsilon_1}}{\log^2{|\log{|t|}|}}.
\end{equation}
or if $|x|$ so large that 
\begin{equation}
\label{eq:x large}
\frac{|x|^2(k-k^2)}{\beta |t|}>\frac{2\alpha k}{\beta}.
\end{equation}
We argue that at least one of these inequalities must hold. Indeed, fix $|t|$ so that
\begin{equation}
\label{eq:t- condition}
\frac{\epsilon_1}{2}\log|\log{|t|}|>4\frac{\alpha}{\beta}.
\end{equation}

{\bf 1. } In the case \eqref{eq:x small}, we take logarithms to get
\[
 \log{\frac{2\alpha k}{\beta}} < \frac{|x|^2}{\beta t}k +\epsilon_1\log|\log{|t|}|-2\log\log|\log{|t|}|, 
\]
 or
 \begin{align*}
 \frac{|x|^2}{\beta|t|}k &<\epsilon_1\log|\log{|t|}|-2\log\log|\log{|t|}|-\log{\frac{2 \alpha k}{\beta}} \\
& <\epsilon_1\log|\log{|t|}|-\log{k} \\
&< \epsilon_1\log|\log{|t|}|-\frac{\epsilon_1}{2}\log|\log{|t|}|\\
 & =\frac{\epsilon_1}{2}\log|\log{|t|}|>2
 \end{align*}
 for $|t|$ small enough. Hence \eqref{eq:negative} is satisfied. 

{\bf 2.} On the other hand, if \eqref{eq:x large} holds,  we calculate
 \[
 \frac{|x|^2}{\beta |t|}>\frac{2\alpha}{\beta(1-k)}>4\frac{\alpha}{\beta}.
 \]
 Since we chose $|t|$ according to \eqref{eq:t- condition},  we have that at least one of the inequalities \eqref{eq:x small} or \eqref{eq:x large} is satisfied for any $x\neq 0$, and $w$ is a subsolution.

{\bf The case $\bf{x= 0}$.} Then $Dw=0$, and according to Definition \ref{def:viscsuper} we need to show that for every $\phi \in C^2(\Omega)$ so that $w-\phi$ has a maximum at $(0, t)$, we have
\begin{equation}
\label{eq:subparabolic}
\phi_t(0, t)\leq \frac{1}{p}\text{tr}(D^2\phi(0, t))+\frac{p-2}{p}\Lambda(D^2\phi(0, t)).
\end{equation}
We show that $w$ itself satisfies this condition. An argument similar to the one in the proof of Theorem \ref{thm:petrovsky} then shows that $w$ is a viscosity subsolution.

Inserting the derivatives at $(0, t)$, we see that \eqref{eq:subparabolic} reads
\[
f'(t)+g'(t) \leq -\frac{\alpha f(t)}{\beta t}k,
\]
or
\[
-\frac{1+\epsilon_1}{t\cdot|\log{|t|}|^{2+\epsilon_1}}  +\frac{1}{\log^2|\log{|t|}|\cdot|\log{|t|}|\cdot t} \leq \frac{\alpha k}{\beta t\cdot|\log{|t|}|^{1+\epsilon_1}}. 
\]
This is the same as
\[
\frac{1+\epsilon_1}{|\log{|t|}|}-\frac{|\log{|t|}|^{\epsilon_1}}{\log^2|\log{|t|}|}+\frac{\alpha k}{\beta}\leq 0,
\]
but this inequality is the same as the one in \eqref{eq:negative}, and because of our choices of $|t|$ and $k$. This shows that the condition \eqref{eq:subparabolic} holds, and $w$ is a subsolution even in this case.

 Now we consider the level set $w(x, t)=m$, $m<0$, and calculate
  \begin{align*}
  w(x, t)&=\frac{-1}{|\log{|t|}|^{\epsilon_1+1}}\mathrm{e}^{-\frac{|x|^2}{\beta t}k}+\frac{1}{\log|\log{|t|}|} =m \\
   &\iff \frac{-1}{|\log{|t|}|^{\epsilon_1+1}}\mathrm{e}^{-\frac{|x|^2}{\beta t}k} =m-\frac{1}{\log|\log{|t|}|} \\
 &\iff \mathrm{e}^{-\frac{|x|^2}{ \beta t}k} =|\log{|t|}|^{\epsilon_1+1}\left(\frac{1}{\log|\log{|t|}|}-m\right) \\
  &\iff -\frac{|x|^2}{\beta t}k = (\epsilon_1+1)\log|\log{|t|}|+\log\left(\frac{1}{\log|\log{|t|}|}-m\right), 
  \end{align*}
  or simply
  \begin{equation}
   \label{eq:smalldomain}
    x^2 =-\beta t\left(\frac{\epsilon_1+1}{k}\log|\log{|t|}|+\frac{1}{k}\log\left(\frac{1}{\log|\log{|t|}|}-m\right)\right).
   \end{equation}
   Letting $\tilde{\Omega}$ denote the domain enclosed by \eqref{eq:smalldomain} and the hyperplane $t=c<0$, we have that for $m<0$, the function $v$ \eqref{eq:irregularitatsbarriare} is negative in $\tilde{\Omega}$, and $w(x, 0)=0$. This shows that the origin is an irregular boundary point for $\tilde{\Omega}$.
   
    The inclusion $\tilde \Omega \subset \Omega$ requires that
   \[
   \frac{\epsilon_1+1}{k}\log|\log{|t|}|+\frac{1}{k}\log\left(\frac{1}{\log|\log{|t|}|}-m\right) <(1+\epsilon)\log|\log|t||
   \]
   for small $|t|$. Fix $k$ close to 1 and $\alpha$ close to 0 so that
   \[
   \frac{\epsilon_1+1}{k}<1+\frac{\epsilon}{2}.
   \]
   Thus we have to verify that
   \[
   \left(\frac{1}{\log|\log{|t|}|}+|m|\right)^{\frac{1}{k}} \leq |\log|t||^{\frac{\epsilon}{2}},
   \] 
   but this obviously holds for small $|t|$ since the left-hand side is bounded.
   
    Hence $\tilde{\Omega} \subset \Omega$ for $\epsilon_1$ and $c$ close to 0, and $k$ close to 1, $(0,0)$ is an irregular boundary point for $\Omega$ as well.
     \end{proof}
   \section*{Acknowledgement}
  \noindent The author would like to thank Jana Björn and Vesa Julin  for discovering a flaw in the original proof of the Petrovsky criterion.
\bibliographystyle{alpha}
\bibliography{/Users/nikubo74/Desktop/PhD/refs} 

\newcommand{\etalchar}[1]{$^{#1}$}
\begin{thebibliography}{{Ubo}17}

\bibitem[Bar52]{barenblatt}
Grigory~I. Barenblatt.
\newblock On self-similar motions of compressible fluid in a porous medium.
\newblock {\em Prikladnaya Matematika i Mekhanika (Applied Mathematics and
  Mechanics (PMM))}, 1952.

\bibitem[BBP17]{bjorn2017tusk}
Anders Bj{\"o}rn, Jana Bj{\"o}rn, and Mikko Parviainen.
\newblock The tusk condition and {P}etrovski criterion for the normalized
  $p$-parabolic equation.
\newblock {\em arXiv preprint arXiv:1712.06807}, 2017.

\bibitem[BG13]{banerjee2013gradient}
Agnid Banerjee and Nicola Garofalo.
\newblock Gradient bounds and monotonicity of the energy for some nonlinear
  singular diffusion equations.
\newblock {\em Indiana University Mathematics Journal}, pages 699--736, 2013.

\bibitem[BG15]{banerjee2015dirichlet}
Agnid Banerjee and Nicola Garofalo.
\newblock On the {D}irichlet boundary value problem for the normalized
  {$p$}-{L}aplacian evolution.
\newblock {\em Communications on Pure \& Applied Analysis}, 14(1), 2015.

\bibitem[BSA15]{imageenhancement}
George Baravdish, Olof Svensson, and Freddie Aastrom.
\newblock On backward $p(x)$-parabolic equations for image enhancement.
\newblock {\em Numerical Functional Analysis and Optimization}, 36(2):147--168,
  2015.

\bibitem[CL83]{crandall1983viscosity}
Michael~G. Crandall and Pierre-Louis Lions.
\newblock Viscosity solutions of {H}amilton-{J}acobi equations.
\newblock {\em Transactions of the American Mathematical Society},
  277(1):1--42, 1983.

\bibitem[CW03]{crandall2003another}
Michael~G. Crandall and Pei-Yong Wang.
\newblock Another way to say caloric.
\newblock {\em Journal of Evolution Equations}, 3(4):653--672, 2003.

\bibitem[DiB95]{dibenedetto1995}
Emmanuele DiBenedetto.
\newblock {\em Partial Differential Equations}.
\newblock Birkhäuser, 1995.

\bibitem[Doe09]{does2009evolution}
Kerstin Does.
\newblock An evolution equation involving the normalized {$p$}-{L}aplacian.
\newblock {\em Communications on Pure and Applied analysis}, 2009.

\bibitem[ES{\etalchar{+}}91]{evans1991motion}
Lawrence~C. Evans, Joel Spruck, et~al.
\newblock Motion of level sets by mean curvature. i.
\newblock {\em Journal of Differential Geometry}, 33(3):635--681, 1991.

\bibitem[JK06]{Juutinen2006}
Petri Juutinen and Bernd Kawohl.
\newblock On the evolution governed by the {I}nfinity {L}aplacian.
\newblock {\em Mathematische Annalen}, 335(4):819--851, 2006.

\bibitem[KL96]{kilpelainen1996dirichlet}
Tero Kilpel{\"a}inen and Peter Lindqvist.
\newblock On the {D}irichlet boundary value problem for a degenerate parabolic
  equation.
\newblock {\em SIAM Journal on Mathematical Analysis}, 27(3):661--683, 1996.

\bibitem[MPR10]{manfredi2010asymptotic}
Juan~J. Manfredi, Mikko Parviainen, and Julio~D. Rossi.
\newblock An asymptotic mean value characterization for a class of nonlinear
  parabolic equations related to tug-of-war games.
\newblock {\em SIAM Journal on Mathematical Analysis}, 42(5):2058--2081, 2010.

\bibitem[Per23]{perron1923neue}
Oskar Perron.
\newblock Eine neue {B}ehandlung der ersten {R}andwertaufgabe f{\"u}r {$\Delta
  u= 0$}.
\newblock {\em Mathematische Zeitschrift}, 18(1):42--54, 1923.

\bibitem[Pet35]{petrovsky1935ersten}
Ivan~G. Petrovsky.
\newblock Zur ersten {R}andwertaufgabe der {W}{\"a}rmeleitungsgleichung.
\newblock {\em Compositio Math}, 1:383--419, 1935.

\bibitem[PS{\etalchar{+}}08]{peres2008tug}
Yuval Peres, Scott Sheffield, et~al.
\newblock Tug-of-war with noise: A game-theoretic view of the $ p
  $-{L}aplacian.
\newblock {\em Duke Mathematical Journal}, 145(1):91--120, 2008.

\bibitem[Ste29]{sternberg1929gleichung}
Wolfgang Sternberg.
\newblock {\"U}ber die {G}leichung der {W}{\"a}rmeleitung.
\newblock {\em Mathematische Annalen}, 101(1):394--398, 1929.

\bibitem[{Ubo}17]{ubostad1}
N.~{Ubostad}.
\newblock {Boundary Regularity for the $\infty$-Heat Equation}.
\newblock {\em ArXiv e-prints}, September 2017.

\bibitem[Wat12]{watson2012introduction}
Neil~A. Watson.
\newblock {\em Introduction to Heat Potential Theory}, volume 182 of {\em
  Mathematical Surveys and Monographs}.
\newblock American Mathematical Society, 2012.

\end{thebibliography}

	\end{document}